\def\namedlabel#1#2{\begingroup
    #2%
    \def\@currentlabel{#2}%
    \phantomsection\label{#1}\endgroup}
\newlist{describe}{description}{1}
\setlist[describe,1]{%
  font=\normalfont\textbf,
  itemindent=0pt,
  wide,
  itemsep=0pt,topsep=2pt}
\newcommand{\vertiii}[1]{{\left\vert\kern-0.25ex\left\vert\kern-0.25ex\left\vert #1 
    \right\vert\kern-0.25ex\right\vert\kern-0.25ex\right\vert}}
\newcommand{\rc}{{\mathscr{R}}}    
\newcommand{\cIn}{\Ind_{\{\abs{z}\le 1\}}}
\newcommand{\Uadm}{\bm{\mathfrak{Z}}}
\newcommand{\Act}{\mathcal{Z}}
\newcommand{\Usm}{\mathfrak{V}_{\mathsf{sm}}}
\newcommand{\Ussm}{\mathfrak{V}_{\mathsf{ssm}}}
\newcommand{\Inv}{\mathcal{M}} 
\newcommand{\Ag}{{\mathcal{A}}}  
\newcommand{\Lg}{{\mathcal{L}}}    
\newcommand{\Bor}{{\mathfrak{B}}}  
\newcommand{\cB}{{\mathcal{B}}}  
\newcommand{\sB}{{\mathscr{B}}}  
\newcommand{\Cc}{{\mathcal{C}}}   
\newcommand{\sC}{{\mathscr{C}}}   
\newcommand{\sF}{{\mathfrak{F}}}   
\newcommand{\eom}{{\mathcal{G}}} 
\newcommand{\cI}{{\mathcal{I}}}  
\newcommand{\Lp}{{L}}            
\newcommand{\Lpl}{L_{\text{loc}}}            
\newcommand{\Pm}{{\mathcal{P}}}  
\newcommand{\cN}{{\mathcal{N}}}  
\newcommand{\Lyap}{{\mathscr{V}}}  
\newcommand{\RR}{\mathds{R}}
\newcommand{\NN}{\mathds{N}}
\newcommand{\Rd}{{\mathds{R}^{d}}}
\DeclareMathOperator{\Exp}{\mathbb{E}}
\DeclareMathOperator{\Prob}{\mathbb{P}}
\newcommand{\D}{\mathrm{d}}
\newcommand{\E}{\mathrm{e}}
\newcommand{\Ind}{\mathds{1}}   
\newcommand{\Sob}{{\mathscr W}}    
\newcommand{\Sobl}{{\mathscr W}_{\text{loc}}} 
\newcommand{\df}{\coloneqq}
\DeclareMathOperator*{\trace}{trace}
\DeclareMathOperator*{\Argmin}{Arg\,min}
\newcommand{\grad}{\nabla}
\newcommand{\uuptau}{{\Breve\uptau}}
\newcommand{\abs}[1]{\lvert#1\rvert}
\newcommand{\norm}[1]{\lVert#1\rVert}
\newcommand{\babs}[1]{\bigl\lvert#1\bigr\rvert}
\newcommand{\bnorm}[1]{\bigl\lVert#1\bigr\rVert}
\definecolor{dmagenta}{rgb}{.4,.1,.5}
\definecolor{dblue}{rgb}{.0,.0,.5}
\definecolor{mblue}{rgb}{.0,.0,.8}
\definecolor{ddblue}{rgb}{.0,.0,.4}
\definecolor{dred}{rgb}{.6,.0,.0}
\definecolor{dgreen}{rgb}{.0,.5,.0}
\definecolor{Eeom}{rgb}{.0,.0,.5}
\newcommand{\TheTitle}{Ergodic control of a class of jump diffusions} 
\newcommand{\TheAuthors}{A. Arapostathis, L. Caffarelli, G. Pang, and Y. Zheng}
\headers{\TheTitle}{\TheAuthors}
\title
{{Ergodic control of a class of jump diffusions\\[2pt]
with  finite L\'evy measures and rough kernels}}
\author{Ari Arapostathis\thanks{Department of Electrical and Computer
Engineering, The University of Texas at Austin, 2501 Speedway, EER 7.824,
Austin, TX 78712 (\email{ari@ece.utexas.edu}).}
\and Luis Caffarelli\thanks{Department of Mathematics,
The University of Texas at Austin, 2515 Speedway, RLM 10.150,
Austin, TX 78712 (\email{caffarel@math.utexas.edu}).}
\and Guodong Pang\thanks{The Harold and Inge Marcus Department of Industrial and
Manufacturing Engineering,
College of Engineering,
Pennsylvania State University,
University Park, PA 16802 (\email{gup3@psu.edu}, \email{yxz282@psu.edu}).}
\and Yi Zheng\footnotemark[3]}
\begin{document}
\maketitle

\begin{abstract}
We study the ergodic control problem for a class of jump diffusions in $\Rd$,
which are controlled through the drift with bounded controls.
The L$\acute{\text{e}}$vy measure is finite, but has
no particular structure---it can be anisotropic and singular.
Moreover, there is no blanket ergodicity assumption for the controlled process.
Unstable behavior is  `discouraged' by
the running cost which satisfies a mild coercive hypothesis
(i.e., is near-monotone).
We first study the problem in its weak formulation as an optimization problem
on the space of infinitesimal ergodic occupation
measures, and derive the Hamilton--Jacobi--Bellman equation
under minimal assumptions on the parameters, including verification of optimality
results, using only analytical arguments.
We also examine the regularity of invariant measures.
Then, we address the jump diffusion model, and obtain a
complete characterization of optimality.  
\end{abstract}

\begin{keywords}
controlled jump diffusions;
compound Poisson process; L\'evy process;
ergodic control; Hamilton--Jacobi--Bellman equation
\end{keywords}

\begin{AMS}
93E20, 60J75, 35Q93; Secondary, 60J60, 35F21, 93E15
\end{AMS}

\section{Introduction}\label{S1}
Optimal control of jump diffusions has recently attracted much
attention from the control community, primarily due to its applicability
to queueing networks, mathematical finance
\cite{cont-tankov}, image processing \cite{gilboa-osher}, etc.
Many results for the discounted problem are available in \cite{BenLi-84},
including the game theoretic setting, and different applications are discussed.
However, studies of the ergodic control problem are rather scarce.
Ergodic control of reflected jump diffusions over a bounded domain
can be found in \cite{Menaldi-97}.
The ergodic control problem in $\Rd$ is studied in \cite{Menaldi-99}, albeit
under very strong blanket stability assumptions. 
We should also mention here the treatment of the impulse control problem
in \cite{Bayraktar-13,Davis-10,Liu-18}. 

Our work in this paper is motivated from ergodic control problems
for multiclass stochastic networks in the Halfin--Whitt regime,
under service interruptions.
For this model, the pure jump process driving the limiting queueing process
is compound Poisson (see Theorem~3.2 in \cite{APS17}), with
a L\'evy measure that is anisotropic, and in general, singular with respect to
the Lebesgue measure.
In fact, the jumps are biased towards a given direction, and thus
the L\'evy measure has no symmetry whatsoever.
We assume that the running cost is coercive, also known as
near-monotone (see \cref{E-nm}), and do not impose any blanket stability
hypotheses on the controlled jump diffusion.
We treat a general class of jump diffusions which is abstracted from
diffusion approximations
of stochastic networks, and
whose controlled infinitesimal generator has the form
\begin{align}\label{E-Ag}
\Ag u(x,z) &\;\df\; \sum_{i,j} a^{ij}(x)
\frac{\partial^2u}{\partial x_i \partial x_j}(x) + \sum_{i} b^i(x,z)
\frac{\partial u}{\partial x_i}(x) \\
&\mspace{100mu} + \int_{\RR^d}
\bigl( u(x+y) - u(x) - \Ind_{\{\abs{y}\le1\}} \langle y, \nabla u(x)\rangle\bigr)
\,\nu_x(\D y)\,. \nonumber
\end{align}
Here, $z$ is a control parameter that lives in a compact metric space $\Act$,
and $\nu_x(\D{y})$ is a finite Borel measure on $\Rd$ for each $x$,
while $x\mapsto\nu_x(A)$ is a Borel measurable function for each Borel set $A$.
Throughout the paper, we assume that $d\ge2$.
The coefficients of $\Ag$ are assumed to satisfy the following.

\begin{assumption}\label{A1.1}
\begin{itemize}
\item[(a)]
The matrix $a=[a^{ij}]$ is symmetric, positive definite,
and locally Lipschitz continuous.
The drift $b\colon\Rd\times\Act\to\Rd$ is continuous.
\item[(b)]
The map $x\mapsto \bm\nu(x)\df \nu_x(\Rd)$ is locally bounded.
\item[(c)]
the map $x\mapsto\nu_x(K-x)$ is bounded on $\Rd$
for any fixed compact set $K\subset\Rd$.
\end{itemize}
\end{assumption}

The generator 
$\Ag$ in \cref{E-Ag} covers a variety of models of jump diffusions
which appear in the literature
\cite{Bass-04,Bogachev-14,Foondun-09,GS72,Skorokhod-89}.
Note also that the `jump rate' $\bm\nu(x)$ is allowed
to be state dependent as in \cite{Locherbach-17}.
The hypotheses in \cref{A1.1} are quite general, and do not
imply the existence of a controlled process with generator $\Ag$.
Our main goal in this paper is to establish  general results for
ergodic control of jump diffusions governed for this class of operators.
To accomplish this, we first state the ergodic control problem for
the operator $\Ag$ as a convex optimization problem over the set
of infinitesimal ergodic occupation measures.
We then proceed to study the ergodic Hamilton--Jacobi--Bellman (HJB) equation
via analytical methods, without assuming that the martingale problem
for $\Ag$ is well posed.
This of course precludes arguments that utilize stochastic representations
of solutions of elliptic equations.
Later, in \cref{S4}, we specialize these results to a fairly general model
of controlled jump diffusions with finite L\'evy measure.

It is well known that the standard method of
deriving the ergodic HJB
on $\Rd$ is based on the vanishing discount approach,
and relies crucially on structural properties that permit uniform estimates
for the gradient (e.g., viscous equations in $\Rd$), or the
Harnack property.
Recent work on nonlocal equations has resulted in important regularity
results \cite{bass-2009,Bjorland-12,caffarelli-silvestre-regu,caffarelli-silvestre-approx} that should prove very valuable in studying control problems.
However, most of this work concerns L\'evy jump processes whose kernel has
a `nice' density resembling that of a fractional Laplacian.
For the problem at hand, even though the L\'evy measure $\nu_x$ is finite,
and there is a non-degenerate Wiener process component, 
the L\'evy measure is anisotropic, and could be singular
\cite[Section~3.2]{APS17}.
As a result, there is no hope for the Harnack property for positive solutions to hold
as the following example shows.

\begin{example}
Consider an operator $\Ag$ in $\RR^2$, with $a$ the identity matrix,
$b=(3, 0)$,
and $\nu=\nu_x$ a Dirac mass at $\Tilde{x}=(3,0)$.
Let $f_\epsilon\in\Cc^2(\RR^2)$, with $\epsilon\in(0,1)$,
be defined in polar coordinates by
\begin{equation*}
f_\epsilon(r,\theta)\;\df\; -\log (r)\,\Ind_{\{r\ge\epsilon\}}
+\Bigl(\tfrac{3}{4} - \tfrac{r^2}{\epsilon^2} + \tfrac{r^4}{4\epsilon^4}
- \log(\epsilon)\Bigr)\,\Ind_{\{r<\epsilon\}}\,.
\end{equation*}
This function is used in \cite[p.~111]{ProtWein-84} to exhibit a family
of positive superharmonic functions for the Laplacian that violates the
Harnack property.
Let $u_\epsilon$ be a function which agrees with
$f_\epsilon$ on the unit ball $B_1$ centered at $0$, and takes the values
$u_\epsilon(\Tilde{r},\Tilde{\theta})\;=\;
\bigl( \tfrac{4}{\epsilon^2} - \tfrac{4\Tilde{r}^2}{\epsilon^4}
+ f_\epsilon(\Tilde{r},\Tilde{\theta})\bigr)\,\Ind_{\{\Tilde{r}<\epsilon\}}$
on the unit ball $B_1(\Tilde{x})$
centered at $\Tilde{x}$, when expressed
in polar coordinates $(\Tilde{r},\Tilde{\theta})$ which are centered at $\Tilde{x}$.
Let $u_\epsilon$ take any nonnegative value elsewhere in $\RR^2$.
Then $u_\epsilon$ is nonnegative on $\RR^2$ and satisfies
$\Ag u_\epsilon=0$ in $B$.
However, $\frac{u_\epsilon(0,\theta)}{u_\epsilon(\E^{-1},\theta)}=-\log(\epsilon)$,
and thus the family violates the Harnack property for $\Ag$.
\end{example}


Under the general hypotheses of \cref{A1.1}, even if
the operator $\Ag$ is the generator of a  Markov process,
the process might not be regular,
or, in case it is positive recurrent,
the mean hitting times to an open ball might not be locally bounded.
In the latter case, it is futile to search for solutions to the
ergodic HJB equation, even in a viscosity sense.
In \cref{S3}, we add two hypotheses to address these pathologies.
The first (see \ref{H1}), is the Feller--Has$^{_{^{^\prime}}}\!$minski\u{\i} criterion
for a diffusion process with generator
$\Ag$ to be \emph{regular} (or \emph{conservative}, or \emph{non-explosive}),
which requires
that the equation $\Ag u - u =0$ has no bounded positive solutions on $\Rd$.
This property is equivalent to regularity, and it is clear from the proof
of this equivalence
in \cite[Theorem~4.1]{Hasm-60} that the equation can be replaced
by $\Ag u - \alpha u =0$ for $\alpha>0$.
The second hypothesis, \ref{H2}, states that under some stationary Markov control there
exists a nonnegative solution $\Lyap$ to the Lyapunov equation
$\Ag\Lyap \le C\Ind_{\sB} - \rc$, where $\rc$ is the running cost,
$\sB$ is a ball, and
$C$ is a constant.
Hypothesis \ref{H2} can be relaxed under certain assumpions on $\nu_x$ (see
\cref{T3.8}).

The paper is organized as follows.
In \cref{S1.1} we summarize the notation we use.
\Cref{S2} states the ergodic control problem, in a weak sense,
as a convex optimization problem over the set of infinitesimal
ergodic occupation measures for the operator $\Ag$,
and shows that optimality is attained.
Regularity properties of infinitesimal invariant measures are in \cref{S2.3}.
\Cref{S3} is devoted to the study of the HJB equation under 
\ref{H1}--\ref{H2} mentioned above.
In \Cref{S4} we study a class of jump diffusions, which is abstracted
from the limiting diffusions encountered in
stochastic networks under service interruptions.

\subsection{Notation}\label{S1.1}
The standard Euclidean norm in $\RR^{d}$ is denoted by $\abs{\,\cdot\,}$,
and $\langle\,\cdot\,,\cdot\,\rangle$ denotes the inner product. 
Given two real numbers $a$ and $b$, the minimum (maximum) is denoted by $a\wedge b$ 
($a\vee b$), respectively.
The closure, boundary, complement, and the indicator function
of a set $A\subset\Rd$ are denoted
by $\Bar{A}$, $\partial{A}$, $A^{c}$, and $\Ind_A$,  respectively.
We denote by $\uptau(A)$ the \emph{first exit time} of the process
$X$ from a set $A\subset\RR^{d}$, defined by
$\uptau(A) \;\df\;\inf\;\{t>0\,\colon\,X_{t}\not\in A\}$.
The open ball of radius $R$ in $\RR^{d}$, centered at the origin,
is denoted by $B_{R}$, and we let $\uptau_R\;\df\;\uptau(B_{R})$,
and $\uuptau_R\df \uptau(B^{c}_{R})$.
The Borel $\sigma$-field of a topological space $E$ is denoted by $\Bor(E)$,
and $\Pm(E)$ denotes the set of probability measures on $\Bor(E)$.

For a domain $Q\subset\RR^{d}$,
the space $\Cc^{k}(Q)$ ($\Cc^{\infty}(Q)$), $k\ge 0$,
refers to the class of all real-valued functions on $Q$ whose partial
derivatives up to order $k$ (of any order) exist and are continuous,
while $\Cc_{\mathrm{c}}^k(Q)$ ($\Cc^{k}_{b}(Q)$)
denote the subsets of $\Cc^{k}(Q)$, 
consisting of functions that have compact support
(whose partial derivatives are bounded
in $Q$).
The space $\Lp^{p}(Q)$, $p\in[1,\infty)$, stands for the Banach space
of (equivalence classes of) measurable functions $f$ satisfying
$\int_{Q} \abs{f(x)}^{p}\,\D{x}<\infty$,
and $\Lp^{\infty}(Q)$ is the
Banach space of functions that are essentially bounded in $Q$.
We denote the usual norm on this space by  $\norm{f}_{\Lp^p(Q)}$,
$p\in[1,\infty]$.
The standard Sobolev space of functions on $Q$ whose generalized
derivatives up to order $k$ are in $\Lp^{p}(Q)$, equipped with its natural
norm, is denoted by $\Sob^{k,p}(Q)$, $k\ge0$, $p\ge1$.
In general, if $\mathcal{X}$ is a space of real-valued functions on $Q$,
$\mathcal{X}_{\mathrm{loc}}$ consists of all functions $f$ such that
$f\varphi\in\mathcal{X}$ for every $\varphi\in\Cc_{c}^{\infty}(Q)$.
In this manner we obtain, for example, the space $\Sobl^{2,p}(Q)$.

We adopt the notation
$\partial_{i}\df\tfrac{\partial~}{\partial{x}_{i}}$ and
$\partial_{ij}\df\tfrac{\partial^{2}~}{\partial{x}_{i}\partial{x}_{j}}$
for $i,j\in\{1,\dotsc,d\}$, and we
often use the standard summation rule that
repeated subscripts and superscripts are summed from $1$ through $d$.

\section{The convex analytic formulation}
\label{S2}

Define
$\Lg\colon \Cc^2(\RR^d) \to \Cc(\RR^d \times \Act)$ by
\begin{equation*}
\Lg u(x,z) \;\df\;  a^{ij}(x)
\partial_{ij}u(x) + \Hat{b}^i(x,z)\partial_i u(x)\,,
\end{equation*}
with $\Hat{b}(x,z) \df b(x,z)+ \int_\Rd z\,\cIn\nu_x(\D z)$,
and let
\begin{equation*}
\cI u(x) \;\df\;  \int_\Rd  \bigl(u(x+y)-u(x)\bigr)\,\nu_x(\D{y})\,,
\end{equation*}
provided that the integral is finite. 
Thus $\Ag u(x,z) = \Lg u(x,z) +\cI u(x)$.
With $z\in\Act$ treated as a parameter, we define
$\Lg_z u(x)\df \Lg u(x,z)$, and $\Ag_z u(x)\df \Ag u(x,z)$.

Let $\cB(\Rd,\Act)$ denote the set of Borel measurable maps
$v\colon\Rd\to\Act$.
Such a map $v$ is called a \emph{stationary Markov control}, and we use
the symbol $\Usm$ to denote this class of controls. 
For $v\in\Usm$, we use the simplified notation $b_v(x)\df b\bigl(x,v(x)\bigr)$,
and define $\Ag_v$, $\rc_v$ and $\varrho_v$ analogously.

We augment the class $\Usm$ by  
adopting the well-known \emph{relaxed control} framework \cite[Section~2.3]{ABG12}.
According to this relaxation, controls take values in
$\Pm(\Act)$, the latter denoting the set of probability measures on $\Act$ under
the Prokhorov topology.
Thus, a control $v\in\Usm$ may be viewed as a kernel on $\Pm(\Act)\times\Rd$,
which we write as $v(\D{z}\!\mid\! x)$. 
We extend the definition of $b$ and $\rc$, without changing the notation, i.e.,
we let $b_v(x)\df\int_{\Act} b(x,z)\,v(\D{z}\!\mid\! x)$,
and analogously for $\rc_v$.
We endow $\Usm$ with the
topology that renders it a compact metric space, referred to
as the \emph{topology of Markov controls} \cite[Section~2.4]{ABG12}.
A control is said to be \emph{precise} if it is a measurable map
from $\Rd$ to $\Act$, i.e., if it agrees with the definition in the
preceding paragraph.
It is easy to see that this relaxation preserves \cref{A1.1}.

\subsection{The ergodic control problem for the operator
\texorpdfstring{$\Ag\,$}{}}\label{S2.1}

We fix a countable dense subset $\sC$ of $\Cc_0^2(\RR^d)$ consisting
of functions with compact supports.
Here, $\Cc_0^2(\RR^d)$ denotes the Banach space of functions $f\colon\Rd\to\RR$
that are twice continuously differentiable and their derivatives up to
second order vanish at infinity.

\begin{definition}\label{D2.1}
A probability measure $\mu_{v}\in\Pm(\Rd)$, $v\in\Usm$, is called
\emph{infinitesimally invariant} under $\Ag_v$ if
\begin{equation}\label{E-infmeas}
\int_{\RR^d}^{}\Ag_v f(x)\,\mu_v(\D{x}) \;=\; 0 \quad\forall\,f\in\sC\,.
\end{equation}
If such a $\mu_v$ exists, then we say that
$v$ is a \emph{stable} control, and define the
\emph{(infinitesimal) ergodic occupation measure} $\uppi_v\in \Pm(\RR^d\times \Act)$ by 
$\uppi_v(\D{x},\D{z}) \df \mu_v(\D{x})\,v(\D{z}\,|\, x)$.
We denote by $\Ussm$, $\Inv$, and $\eom$, the sets of
stable controls, infinitesimal invariant probability measures, and
ergodic occupation measures, respectively.
\end{definition}

\begin{remark}
In \cref{D2.1} we select $\sC$ as the function space,
deviating from common practice, where this is selected as $\Cc_0^\infty(\Rd)$,
the space of smooth functions vanishing at infinity.
In general, there is no uniqueness of solutions to \cref{E-infmeas}
\cite{Shaposhnikov-08}.
For the relation between infinitesimally invariant measures
and invariant probability measures for diffusions we refer the reader to
\cite{Bogachev-02}.
Note also, that as shown in \cite{PE82}, in order to assert
that $\mu_v$ is an invariant probability measure for a Markov process
with generator $\Ag_v$,  it suffices to verify \cref{E-infmeas} for
a dense subclass of the domain of $\Ag_v$ consisting of functions
such that the martingale problem is well posed.
\end{remark}

It follows from \cref{D2.1} that $\uppi\in\Pm(\RR^d\times \Act)$
is an ergodic occupation
measure if and only if
$\int_{\RR^d\times\Act}^{} \Ag_zf(x)\,\uppi(\D{x},\D{z}) = 0$
for all $f\in\sC$.
It is also easy to show that the set of ergodic occupation measures
$\eom$ is a closed and convex subset of $\Pm(\RR^d\times \Act)$
(see \cite[Lemma~3.2.3]{ABG12}).

Let $\rc\colon\RR^d\times\Act \mapsto\RR_+$ be a continuous
function, which we refer to as the \emph{running cost} function.
The ergodic control problem for $\Ag$ seeks to minimize
$\uppi(\rc)=\int\rc\,\D\uppi$ over $\uppi\in\eom$.
Thus, the optimization problem is an infinite dimensional linear
program.
We define $\varrho_* \df \inf_{\uppi\in\eom}\,\uppi(\rc)$, and assume,
of course, that this is finite.
Also for $v\in\Ussm$, we let $\varrho_v\df\uppi_v(\rc)$, and we say
that $v$ is \emph{optimal} if $\varrho_v=\varrho_*$.
We seek to obtain a full characterization of optimal controls
via the study of the dual problem, and this leads to the HJB equation.
For more details on this linear programming formulation
see Section~4 in \cite{BhBo}.

\subsection{Well posedness of the control problem}\label{S2.2}
We impose a structural assumption on the running cost which renders
the optimization problem well posed.
We say that a function
$h: \RR^d\times \Act \to \RR_+$ is \emph{coercive relative to a constant $c\in\RR$},
if there exists a constant $\epsilon>0$,
such that the set
$\{x\in \Rd\,\colon\inf_{z\in\Act}\; h(x,z)\le c+\epsilon\}$ is bounded (or empty).

Throughout the paper, we assume that the running cost is coercive relative to
$\varrho_*$,
and we fix a ball $\sB_\circ$ and a constant $\epsilon_\circ$ such that
$\rc(x,z) > \varrho_*+2 \epsilon_\circ$ on $\sB_\circ^c$.
Naturally, this property depends on $\varrho_*$,
but note that, since $\varrho_*<\infty$, it is always satisfied if
the running cost is inf-compact on $\Rd\times\Act$.
Coerciveness of $\rc$ relative to $\varrho_*$
is also known as \emph{near-monotonicity} in the literature,
and it is often written as
\begin{equation}\label{E-nm}
\liminf_{\abs{y}\rightarrow\infty}\; \inf_{z\in\Act}\; \rc(y,z) \;>\;\varrho_*\,.
\end{equation}

We state the following theorem, which follows easily by mimicking the proofs
of Lemma~3.2.11 and Theorem~3.4.5 in \cite{ABG12}.

\begin{theorem}
The map $\uppi\mapsto \uppi(\rc)$ attains its minimum in $\eom$.
\end{theorem}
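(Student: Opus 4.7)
The plan is to adapt the weak-convergence scheme standard for near-monotone ergodic control, following the lines of \cite[Lemma~3.2.11 and Theorem~3.4.5]{ABG12}. Choose a minimizing sequence $\{\uppi_n\}\subset\eom$ with $\uppi_n(\rc)\downarrow\varrho_*$. Since $\Rd$ is not compact, I would pass to the one-point compactification $\widehat{\Rd}\df\Rd\cup\{\infty\}$, so that $\Pm(\widehat{\Rd}\times\Act)$ is weakly compact, and extract a subsequence with $\uppi_n\to\uppi$ weakly in that space. Let $\beta\df\uppi(\{\infty\}\times\Act)$ and let $\uppi^\circ$ denote the restriction of $\uppi$ to $\Rd\times\Act$.

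The first step is to show that, provided $\beta<1$, the normalized restriction $\uppi^\circ/(1-\beta)$ lies in $\eom$. For $f\in\sC$, which has compact support, the function $\Ag_z f$ is bounded on $\Rd\times\Act$: the local part $\Lg_z f$ vanishes off $\supp f$ and is bounded there by \cref{A1.1}(a), while the nonlocal part $\cI f$ is controlled by $2\|f\|_\infty\,\nu_x(\supp f - x)$, which is uniformly bounded in $x$ by \cref{A1.1}(b)--(c). Using this uniform bound together with a cutoff argument applied to $\int\Ag_z f\,\D\uppi_n=0$, one obtains $\int\Ag_z f\,\D\uppi^\circ=0$ for every $f\in\sC$; this is essentially the closedness statement for $\eom$ recalled after \cref{D2.1}.

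The second step rules out escape of mass to infinity. Extend $\rc$ lower semi-continuously to $\widehat{\Rd}\times\Act$ by setting $\rc(\infty,z)\df\varrho_*+2\epsilon_\circ$, which is legitimate by near-monotonicity \cref{E-nm}. Weak lower semi-continuity of the cost functional on the compact space $\widehat{\Rd}\times\Act$ then gives $\uppi(\rc)\le\liminf_n\uppi_n(\rc)=\varrho_*$, while the first step combined with the definition of $\varrho_*$ yields
\[
\uppi(\rc)\;\ge\;(1-\beta)\varrho_*+\beta(\varrho_*+2\epsilon_\circ)
\;=\;\varrho_*+2\beta\epsilon_\circ\,,
\]
forcing $\beta=0$. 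Hence $\uppi\in\eom$ with $\uppi(\rc)=\varrho_*$, so the infimum is attained. The main obstacle is the first step: because $x\mapsto\nu_x$ is only Borel measurable, $\Ag_z f$ is not in general continuous, so weak convergence in $\Pm(\widehat{\Rd}\times\Act)$ does not by itself transfer the ergodic identity to the limit, and one has to exploit the compact support of $f$ together with the uniform bound furnished by \cref{A1.1}(c) to justify the passage.
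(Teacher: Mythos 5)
Your proposal is essentially the paper's own proof: the paper disposes of this theorem by saying it "follows easily by mimicking the proofs of Lemma~3.2.11 and Theorem~3.4.5 in \cite{ABG12}", which is precisely the compactification/near-monotonicity scheme you carry out, and the passage to the limit in $\int \Ag_z f\,\D\uppi_n=0$ that you flag is exactly the closedness of $\eom$ the paper records after \cref{D2.1} (citing \cite[Lemma~3.2.3]{ABG12}). So the argument is correct and follows the same route as the paper.
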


\subsection{Regularity properties of infinitesimal invariant measures}\label{S2.3}
In this section we establish regularity properties
of the densities of infinitesimal invariant
probability measures.
Recall the notation $\bm\nu(x)=\nu_x(\Rd)$ introduced in \cref{A1.1}.
We need the following definition.

\begin{definition}\label{D2.4}
We decompose $\Ag_z = \widetilde\Lg_z + \widetilde\cI$, with
\begin{equation*}
\widetilde\Lg_z u(x) \;\df\; \Lg_z u(x) - \bm\nu(x)u(x)\,,\qquad\text{and}\quad
\widetilde\cI u(x)\;\df\; \int_\Rd u(x+y)\,\nu_x(\D{y})\,.
\end{equation*}
\end{definition}

\begin{theorem}\label{T2.5}
Every $\mu\in\Inv$ has a density $\upphi=\upphi[\mu]$ which
belongs to $\Lpl^p(\Rd)$ for any $p\in\bigl[1,\frac{d}{d-2}\bigr)$,
and is strictly positive.
In addition, if $\nu_x$ is translation invariant and has compact support,
then, for any $\beta\in(0,1)$,
there exists a constant $\Bar{C} = \Bar{C}(\beta,R)$, such that
\begin{equation}\label{ET2.5A}
\abs{\upphi(x) - \upphi(y)} \le \Bar{C}\,\abs{x-y}^{\beta}
\qquad\forall\,x,y\in B_R\,.
\end{equation}
\end{theorem}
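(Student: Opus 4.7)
The plan is to read the infinitesimal invariance identity~\eqref{E-infmeas} as an adjoint second-order elliptic equation for the density of $\mu$, driven by a bounded measure source coming from the nonlocal part of $\Ag_v$. Using the decomposition in Definition~\ref{D2.4}, \eqref{E-infmeas} becomes
\begin{equation*}
\int_{\Rd} \widetilde{\Lg}_v f \,\D\mu \;=\; -\int_{\Rd} \widetilde{\cI} f \,\D\mu
\qquad \forall\, f \in \sC\,,
\end{equation*}
and, for any $f\in\Cc_c(B_R)$, a change of variable together with Assumption~\ref{A1.1}(c) gives
\begin{equation*}
\Babs{\int_{\Rd} \widetilde{\cI} f \,\D\mu}
\;\le\; \norm{f}_{\Lp^\infty(B_R)} \,\sup_{x\in\Rd}\nu_x(B_R-x)\;<\;\infty\,.
\end{equation*}
Density of $\sC$ in $\Cc_0^2$, which contains $\Cc_c^2$, extends the identity to every $f\in\Cc_c^2$, so that $\mu$ solves $\widetilde{\Lg}_v^*\mu = \eta$ distributionally, where $\eta$ is a bounded Borel measure on $\Rd$ (the Riesz representative of $f\mapsto-\int\widetilde{\cI}f\,\D\mu$).

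I would then invoke the local regularity theory for adjoint second-order elliptic equations with measure data, in the spirit of Bogachev--Krylov--R\"ockner (see \cite{Bogachev-02}). Since $a^{ij}$ is locally Lipschitz and uniformly elliptic on compacta, and $\widehat b_v$ and $\bm\nu$ are locally bounded by Assumption~\ref{A1.1}(a)--(c), this theory yields that $\mu$ is absolutely continuous with a density $\upphi=\upphi[\mu]\in\Sobl^{1,p}(\Rd)\subset\Lpl^p(\Rd)$ for every $p\in[1,d/(d-2))$. Strict positivity then follows from the observation that $\widetilde{\cI}^*$ preserves non-negativity, so the equation $\widetilde{\Lg}_v^*\upphi = -\widetilde{\cI}^*\upphi$ exhibits $\upphi\ge0$ as a distributional supersolution of the uniformly elliptic divergence-form operator $\widetilde{\Lg}_v^*$; the weak Harnack inequality of Moser--Trudinger then forces the dichotomy that $\upphi$ is either identically zero or strictly positive a.e., and the former is excluded because $\mu$ has unit mass.

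For the H\"older estimate under $\nu_x\equiv\nu$ translation invariant with compact support, note that
\begin{equation*}
\widetilde{\cI}^*\upphi(x) \;=\; \int_{\Rd}\upphi(x-y)\,\nu(\D y)
\end{equation*}
is a convolution with a finite, compactly supported measure, hence a bounded operator on $\Lpl^p$ with an $R$-dependent norm bound. Consequently the source term in $\widetilde{\Lg}_v^*\upphi = -\widetilde{\cI}^*\upphi$ inherits the $\Lpl^p$ regularity of $\upphi$ for every $p\in[1,d/(d-2))$, and a bootstrap using interior $\Sob^{2,p}$ estimates for the non-divergence version of $\widetilde{\Lg}_v^*$ (whose leading coefficients are locally Lipschitz), followed by Morrey's embedding $\Sob^{1,p}\hookrightarrow \Cc^{0,\beta}$ for $p>d$, yields~\eqref{ET2.5A}; alternatively one may appeal directly to the De~Giorgi--Nash--Moser H\"older estimate for divergence-form operators with bounded measurable coefficients and an $\Lp^q$ source for some $q>d/2$.

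The main obstacle I anticipate is the passage from the ``$\sC$-test'' formulation of invariance to a genuinely distributional equation against arbitrary $\Cc_c^\infty$ test functions: the approximation must simultaneously control $\widetilde{\Lg}_v f$ in $\Lp^1(\mu)$ and $\widetilde{\cI} f$ in $\Lp^\infty$, which is delivered by density in the $\Cc^2$ norm together with the local boundedness of the coefficients of $\Ag_v$ and of $\bm\nu$. A secondary subtlety is checking that the Bogachev--R\"ockner-type regularity results persist under relaxed controls $v\in\Usm$, where $\widehat b_v$ is only measurable; local boundedness, which is all that is used, is preserved by the relaxation.
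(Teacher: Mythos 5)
Your overall strategy is the same as the paper's: read \cref{E-infmeas} as a divergence-form adjoint equation for the density with the nonlocal term as a source, get a density from the Bogachev--Krylov--R\"ockner measure-data result, use the weak Harnack inequality for positivity, and exploit the convolution structure of $\widetilde\cI$ in the translation-invariant case to bootstrap. Two steps, however, do not hold up as written. First, the integrability claim: the adjoint/measure-data theory you invoke yields a density in $\Lpl^p$ only for $p<\frac{d}{d-1}$ (with $\Sobl^{1,p}$ at best in that same range); your assertion that it gives $\Sobl^{1,p}$, hence $\Lpl^p$, for all $p<\frac{d}{d-2}$ is false in general --- the paper's remark following the theorem (a Dirac jump kernel) shows the density can behave near a point like the fundamental solution $\Gamma$, whose gradient is not in $\Lp^p$ locally for $p\ge\frac{d}{d-1}$. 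The passage from $p<\frac{d}{d-1}$ to $p<\frac{d}{d-2}$ has to come from the supersolution (weak Harnack) estimate \cite[Theorem~8.18]{GilTru} applied to $\widetilde\Lg^*_v\upphi\le0$; you do invoke that estimate, but only for the positivity dichotomy, so the claimed $\Lp^p$ range is currently unsupported.

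The more serious gap is in the H\"older part. Your primary route uses ``interior $\Sob^{2,p}$ estimates for the non-divergence version of $\widetilde\Lg^*_v$,'' but no non-divergence version is available: the first-order coefficients $\partial_ja^{ij}-\Hat{b}^i_v$ sit inside the divergence and are only locally bounded measurable ($v\in\Usm$ is merely a measurable, possibly relaxed, control), so they cannot be differentiated, and $\upphi$ has no $\Sob^{2,p}$ regularity to speak of. Your fallback --- De Giorgi--Nash--Moser with an $\Lp^q$ source, $q>\frac{d}{2}$ --- fails on two counts: it produces H\"older continuity only for some exponent determined by the ellipticity data, not for every $\beta\in(0,1)$ as \cref{ET2.5A} requires, and for $d\ge4$ the initial integrability $p<\frac{d}{d-2}$ does not even reach $q>\frac{d}{2}$ without a prior bootstrap. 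What the argument needs, and what the paper uses, is an interior $\Sob^{1,q}$ estimate for divergence-form equations (Morrey's Theorem~5.5.5$'$ in \cite{Morrey-66}), combined with the Minkowski bound $\norm{\widetilde\cI\upphi}_{\Lp^p(B_{2R})}\le\bm\nu\,\norm{\upphi}_{\Lp^p(B_{2R+R_\circ})}$ and Sobolev embedding, iterated so that $\upphi\in\Sobl^{1,q}(\Rd)$ for every $q>1$; only then does Morrey embedding deliver $\Cc^{0,\beta}$ regularity for every $\beta<1$.
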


\begin{proof}
As shown in \cite[Theorem~2.1]{Bogachev-01}, if in some domain
$Q\subset\Rd$, a probability measure
$\mu$ satisfies
\begin{equation}\label{PT2.5A}
\int_{Q} a^{ij} \partial_{ij} f\,\D\mu \;\le\; C\,\sup_{Q}\bigl(\abs{f} + \abs{\grad f}
\bigr)\qquad\forall f\,\in\Cc_c^{\infty}(Q)
\end{equation}
for some constant $C$, then $\mu$ has a density which belongs to
$\Lpl^p(Q)$ for every $p\in[1,d')$, where $d'=\frac{d}{d-1}$.
It is straightforward to verify, using \cref{A1.1}, that a bound of the form
\cref{PT2.5A} holds for any
$\mu\in\Inv$ on any bounded domain $Q$.
It follows that the density $\upphi$ of $\mu$ is in
$\Lpl^p(\Rd)$ for any $p\in[1,d')$, and that it
is a generalized solution to the equation
\begin{align}\label{PT2.5B}
\sum_{i,j} \int_\Rd
\bigl(a^{ij}(x)&\partial_{j}\upphi(x) +
\bigl(\partial_{j}a^{ij}(x) - \Hat{b}^{i}_{v}(x)\bigr)\upphi(x)\bigr)\partial_{i} f(x)\,
\,\D{x}\\
&-\int_\Rd\bm\nu(x)\upphi(x)f(x)\,\D{x}
\;=\; -\int_\Rd\int_\Rd f(x+y) \nu_x(\D{y})\,\upphi(x)\,\D{x}\,,\nonumber
\end{align}
for $f\in\Cc_c^\infty(\Rd)$.
By \cref{PT2.5B}, $\upphi$ is a supersolution to
\begin{equation}\label{PT2.5C}
\widetilde\Lg^{*}_v\upphi(x) \;\df\; \partial_{i}
\bigl(a^{ij}(x)\partial_{j}\upphi(x) +
\bigl(\partial_{j}a^{ij}(x) - \Hat{b}^{i}_v(x)\bigr)\upphi(x)\bigr)
-\bm\nu(x)\upphi(x)\;=\; 0\,.
\end{equation}
Therefore, by
the estimate for supersolutions in \cite[Theorem~8.18]{GilTru},
we deduce that $\upphi\in\Lpl^p(\Rd)$
for any $p\in\bigl[1,\frac{d}{d-2}\bigr)$, and that it is strictly positive.
Note that this theorem assumes
that the supersolution is in $\Sobl^{1,2}(\Rd)$, but this is unnecessary.
The theorem is valid for functions in $\Sobl^{1,p}(\Rd)$ for any $p>1$,
as seen from the results in Section~5.5 of \cite{Morrey-66}, or one can use
the mollifying technique in \cite[Theorem~5.3.4]{ABG12} to show this.

Now suppose that $\nu_x$ is translation invariant and has compact support.
Let $\widehat{\cI}\upphi(x)\df\int_\Rd \upphi(x-y)\,\nu(\D{y})$.
Then \cref{PT2.5B} takes the form
$\widetilde\Lg^{*}_v\upphi(x) = -\widehat{\cI}\upphi(x)$.
The operator $\widetilde\Lg^{*}_v$ satisfies the
hypotheses of Theorem~5.5.5$^\prime$ in \cite{Morrey-66}, which asserts that
$\upphi$ satisfies
\begin{equation}\label{PT2.5F}
\norm{\upphi}_{\Sob^{1,q}(B_R)}
\;\le\; \kappa(p,R)\,\bigl( \norm{\widehat{\cI}\upphi}_{\Lp^{p}(B_{2R})}
+ \norm{\upphi}_{\Lp^{1}(B_{2R})}\bigr)\qquad\forall\,p>1\,,
\end{equation}
with $q=q(p)\df \frac{d p}{d-p}$, and a constant $\kappa(p,R)$ that
depends also on $d$, $\bm\nu$, and the bounds in \cref{A1.1}.
Without loss of generality, suppose that $\nu$ is supported on
a ball $B_{R_\circ}$.
By Minkowski's integral inequality we have
\begin{equation}\label{PT2.5G}
\norm{\widehat{\cI}\upphi}_{\Lp^{p}(B_{2R})}\;\le\;
\bm\nu\,\norm{\upphi}_{\Lp^{p}(B_{2R+R_\circ})}\,.
\end{equation}
On the other hand, by the Sobolev embedding theorem,
$\Sob^{1,q}(B_R)\hookrightarrow \Lp^r(B_R)$ is a continuous embedding
for $q\le r \le\frac{qd}{d-q}$ and $q<d$, and
$\Sob^{1,q}(B_R)\hookrightarrow \Cc^{0,r}(\overline{B}_R)$ is compact
for $r<1-\frac{d}{q}$ and $q>d$.
Therefore, starting say from $p=\frac{d}{d-1}$, we deduce
by repeated applications of \cref{PT2.5F}--\cref{PT2.5G}, and Sobolev embedding,
that
$\upphi\in\Sobl^{1,q}(\Rd)$ for any $q>1$, which implies \cref{ET2.5A}.
\end{proof}

\begin{remark}
The assumption that $\nu_x$ is translation invariant in \cref{T2.5} is sharp.
Consider a jump diffusion with $\upsigma=\sqrt{2}$, $b(x)= x$, $g(x,\xi) = - x$,
and $\bm\nu=1$.
Then $\Ag = \Delta - 1 + \delta_0$, where $\delta_0$ denotes the Dirac mass at $0$.
It can be easily verified that the diffusion is geometrically ergodic by
employing the Lyapunov function $\Lyap(x)=\abs{x}^2$.
The density of the invariant measure $\upphi$ satisfies
$\int \sum_{ij}(\partial_i\upphi)(\partial_j f) + \int\upphi f = f(0)$
for all $f\in\Cc_c^\infty(\Rd)$,
and thus it is a solution of $-\Delta \upphi+ \upphi = \delta_0$
(viewed in the sense of distributions $\mathcal{D}'(\Rd)$).
However, as shown in \cite{Sato-95}, every positive solution $\upphi$ of
this equation, which vanishes at infinity,
satisfies $\upphi(x)\sim \Gamma(x)$ as $x\to0$,
where $\Gamma$ denotes the fundamental solution of $-\Delta$ in $\Rd$.
Thus the density of the invariant measure in the vicinity of $x=0$
is not any better than what is claimed in the first step in the proof,
which shows that it belongs to $\Lpl^p(\Rd)$ for $p<\frac{d}{d-2}$.
One can select the jumps to induce multiple such singularities, and
generate very pathological examples.
Thus, in general, the hypothesis that $\nu_x$ is translation invariant
cannot be relaxed, unless we assume that $\nu_x$ has a suitable density
as shown in \cref{C2.8} below.
\end{remark}

\begin{definition}\label{D2.7}
We say that $\nu_x$ has \emph{locally compact support} if there exists
an increasing map
$\gamma\colon(0,\infty)\to(0,\infty)$ such that
$\nu_x(x+B_{\gamma(R)}^c)=0$ for all $x\in B_R$.
Let $\widehat{\gamma}(R)\df R+\gamma(R)$.
It follows from this definition
 that $B_{\widehat\gamma(R)}$ contains the support of $\nu_x$
for all $x\in B_R$.
\end{definition}

\begin{corollary}\label{C2.8}
Assume that $\nu_x$ has locally compact support, and that
it has a density $\psi_x\in\Lpl^{p_1}(\Rd)$ for some $p_1>\frac{d}{2}$,
satisfying the following:
for some $p_2\in\bigl(1,\frac{d}{d-2}\bigr)$, it holds that
\begin{equation*}
\int_{B_{\gamma(R)}}\biggl(\int_{B_{\widehat\gamma(R)}}
\abs{\psi_x(y)}^{p_i}\,\D{y}\biggr)^{\frac{1}{p_i-1}}
\,\D{x} \;<\;\infty\,,\quad i=1,2\,,\quad\forall R>0\,.
\end{equation*}
Then \cref{ET2.5A} holds.
\end{corollary}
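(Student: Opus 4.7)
The plan is to imitate the proof of \cref{T2.5}, with the role of the convolution $\widehat\cI\upphi$ played by a non-translation-invariant analogue. After the change of variable $w=x+y$ in the right-hand side of \cref{PT2.5B}, and using that $\nu_x$ has density $\psi_x$, the density $\upphi$ satisfies
\begin{equation*}
\widetilde\Lg^{*}_v\upphi(w) \;=\; -\widehat\cI\upphi(w)\,,\quad\text{where}\quad
\widehat\cI\upphi(w) \;\df\;\int_\Rd \upphi(x)\,\psi_x(w-x)\,\D{x}\,,
\end{equation*}
in the sense of distributions. The locally compact support of $\nu_x$ ensures that, for $w\in B_{2R}$, the integrand defining $\widehat\cI\upphi(w)$ vanishes outside a ball $B_M$ in $x$, for some $M=M(R)$ determined by $\widehat\gamma$; this is the analogue of the fact that allowed the use of Minkowski's inequality in \cref{PT2.5G}.

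The key estimate is that, for $p\in\{p_1,p_2\}$ and any $R>0$,
\begin{equation*}
\bnorm{\widehat\cI\upphi}_{\Lp^{p}(B_{2R})}
\;\le\; \norm{\upphi}_{\Lp^{p}(B_M)}\,
\biggl(\int_{B_M}\norm{\psi_x}_{\Lp^{p}(B_{\widehat\gamma(M)})}^{p/(p-1)}\,\D{x}\biggr)^{(p-1)/p}\,.
\end{equation*}
This is obtained by applying Minkowski's integral inequality to the $\Lp^{p}_w$-norm, followed by H\"older's inequality in the $x$-variable; the bracketed factor is precisely the hypothesis integral of \cref{C2.8} raised to the $(p-1)/p$ power (after noting that $p/(p-1)\cdot(1/p)=1/(p-1)$), hence finite. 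Since $\widehat\cI$ is thereby a bounded linear map $\Lp^{p}(B_M)\to\Lp^{p}(B_{2R})$ for both $p=p_1$ and $p=p_2$, Riesz--Thorin interpolation extends the estimate to every intermediate $p$ in the interval determined by $p_1$ and $p_2$.

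With this in hand, the rest is the bootstrap from the end of \cref{T2.5}'s proof. Starting with $\upphi\in\Lp^{p_2}_{\mathrm{loc}}(\Rd)$ (from the first part of that theorem), the key estimate gives $\widehat\cI\upphi\in\Lp^{p^{(n)}}_{\mathrm{loc}}$ at each step, \cref{PT2.5F} yields $\upphi\in\Sobl^{1,q^{(n)}}(\Rd)$ with $q^{(n)}=dp^{(n)}/(d-p^{(n)})$, and Sobolev embedding upgrades this to $\upphi\in\Lp^{p^{(n+1)}}_{\mathrm{loc}}(\Rd)$ with $p^{(n+1)}=dp^{(n)}/(d-2p^{(n)})>p^{(n)}$. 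Finitely many iterations drive $p^{(n)}$ above $d/2$; applying \cref{PT2.5F} once more with the key estimate at $p_1>d/2$ produces $\upphi\in\Sobl^{1,q}(\Rd)$ with $q=dp_1/(d-p_1)>d$, and $\Sob^{1,q}(B_R)\hookrightarrow\Cc^{0,\beta}(\overline{B}_R)$ delivers \cref{ET2.5A}. The main technical obstacle is the key estimate: one must order Minkowski and H\"older so that the right-hand side matches exactly the integral in \cref{C2.8}. Once that is done, both exponents in the hypothesis play their role -- $p_2<d/(d-2)$ matches the natural initial integrability of $\upphi$, while $p_1>d/2$ is precisely what forces the terminal Sobolev embedding into a H\"older class, and interpolation between them covers the intermediate exponents that arise during the bootstrap.
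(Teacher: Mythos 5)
Your proposal is correct and follows essentially the same route as the paper's proof: the same change of variables giving $\widehat{\cI}\upphi(x)=\int_{\Rd}\psi_a(x-a)\,\upphi(a)\,\D{a}$, the same Minkowski--H\"older estimate whose bracketed factor is exactly the hypothesis integral, interpolation between the exponents $p_1$ and $p_2$, and the bootstrap from the proof of \cref{T2.5} terminating at $q=\frac{dp_1}{d-p_1}>d$. The only (immaterial) difference is that you invoke Riesz--Thorin where the paper states a weak-type bound and applies Marcinkiewicz interpolation; since strong bounds hold at both endpoint exponents, either works.
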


\begin{proof}
Note that
\begin{align*}
\int_{\Rd}\biggl(\int_{\Rd} f(x+y) \psi_x(y)\,\D{y}\biggr) \upphi(x)\,\D{x}
&\;=\;
\int_{\Rd}f(z)\biggl(\int_{\Rd} \psi_{z-y}(y)\,\upphi(z-y)\,\D{y}\biggr) \D{z}\\
&\;=\;
\int_{\Rd}f(z)\biggl(\int_{\Rd} \psi_{z}(z-a)\,\upphi(a)\,\D{a}\biggr) \D{z}\,.
\end{align*}
Therefore,
$\widehat{\cI}\upphi(x)=\int_{\Rd} \psi_{a}(x-a)\,\upphi(a)\,\D{a}$.
By the Minkowski integral inequality and the H\"older inequality,
we obtain
\begin{align*}
\bnorm{\widehat{\cI}h(z)}_{\Lp^p(B_R)} &\;=\;
\biggl(\int_{B_R}\biggl(\int_{B_{\gamma(R)}}
\psi_{a}(z-a)\,\abs{h(a)}\,\D{a}\biggr)^p\,\D{z}\biggr)^{\nicefrac{1}{p}}\\
&\;\le\;
\int_{B_{\gamma(R)}}\abs{h(a)}
\biggl(\int_{B_R} \abs{\psi_{a}(z-a)}^p\,\D{z}\biggr)^{\nicefrac{1}{p}}
\,\D{a}\\
&\;\le\;
\norm{h}_{\Lp^p(B_{\gamma(R)})}
\biggl(\int_{B_{\gamma(R)}}
\biggl(\int_{B_R} \abs{\psi_{a}(z-a)}^p\,\D{z}\biggr)^{\nicefrac{1}{(p-1)}}\,\D{a}
\biggr)^{\nicefrac{(p-1)}{p}}\\
&\;\le\;
\norm{h}_{\Lp^p(B_{\gamma(R)})}
\biggl(\int_{B_{\gamma(R)}}
\bnorm{\psi_{a}}_{\Lp^p(B_{\widehat\gamma(R)})}^{\nicefrac{p}{(p-1)}}\,\D{a}
\biggr)^{\nicefrac{(p-1)}{p}}\,.
\end{align*}

Therefore, the map $\widehat{\cI}h$
is a linear mapping from $\Lp^{p_1}(B_{\gamma(R)})\cup\Lp^{p_2}(B_{\gamma(R)})$ into
$\Lp^{p_1}(B_R)\cup\Lp^{p_2}(B_R)$
and satisfies  
\begin{equation*}
\babs{ \bigl\{x\in B_R \,\colon \abs{\widehat{\cI}h(x)}>t\bigr\}} \;\le\; C \,
\frac{\norm{h}_{\Lp^{p_i}(B_{\gamma(R)})}}{t^{\,p_i}}
\end{equation*}
for some constant $C$,
for all $h\in\Lp^{p_i}(B_R)$, $i=1,2$.
Here, $\abs{A}$ denotes the Lebesgue measure of a set $A$.
Thus, by the Marcinkiewicz interpolation theorem, it extends to a
bounded linear map from 
$\Lp^p(B_{\gamma(R)})$ into $\Lp^p(B_R)$ for any $p\in(p_1,p_2)$.
The result then follows as in the proof of \cref{T2.5}.
\end{proof}

\begin{remark}
It is evident from \cref{C2.8}
that if $\nu_x$ has locally compact support and a density $\psi_x\in\Lp^p(\Rd)$
for some $p>\frac{d}{2}$, such that
$x\mapsto\norm{\psi_x}_{\Lp^p(\Rd)}$ is locally bounded,
then the density of an infinitesimal invariant measure is H\"older continuous.
\end{remark}

%
%
%

\section{The HJB equations}\label{S3}

We first discuss the relationship between infinitesimal invariant probability
measures and Foster--Lyapunov equations.
Next, we derive the $\alpha$-discounted HJB equation, and proceed to
study the ergodic HJB equation using the vanishing discount approach.
The treatment is analytical, and we refrain from using any stochastic
representations of solutions.
We state hypothesis \ref{H1} which was discussed in \cref{S1}.

\begin{describe}
\item[\namedlabel{H1}{(H1)}]
For any $v\in\Usm$, and $\alpha>0$, the equation
$\Ag_v u-\alpha u=0$ has no bounded positive solution
$u\in\Sobl^{2,d}(\Rd)$.
\end{describe}

\subsection{On the Foster--Lyapunov equation}

Consider the hypothesis:
\begin{describe}
\item[\namedlabel{H2}{(H2)}] There exist $\Hat{v}\in\Usm$,
a nonnegative $\Lyap\in\Cc^2(\Rd)$, an open ball $\widehat\sB$,
and a positive constant $\kappa_0$ such that
\begin{equation}\label{E-H2}
\Ag_{\Hat{v}} \Lyap(x) \;\le\; \kappa_0\Ind_{\widehat\sB}(x) -  \rc_{\Hat{v}}(x) \quad
\forall\,x\in\Rd\,.
\end{equation}
\end{describe}

On the other hand, $\varrho_*$ is finite if and only if
\begin{describe}
\item[\namedlabel{H3}{(H3)}]
There exist $\Hat{v}\in\Ussm$, and a probability measure $\mu_{\Hat{v}}$
which solves \cref{E-infmeas},
and $\mu_{\Hat{v}}(\rc_{\Hat{v}})=\int\rc_{\Hat{v}}\,\D\mu_{\Hat{v}}<\infty$.
\end{describe}

For continuous diffusions,
equivalence of \ref{H2} and \ref{H3} is a celebrated result of
Has$^{_{^{^\prime}}}\!$minski\u{\i}
\cite{Hasm-80}.
It is pretty straightforward to show, using probabilistic arguments,
that \ref{H2}$\,\Rightarrow\,$\ref{H3}, and this is in fact true
for a large class of Markov processes.
An analytical argument for continuous diffusions
can be found in the work of
Bogachev and R\"ockner \cite{Bogachev-01d}, under the hypothesis
that $\rc_{\Hat{v}}$ is inf-compact.
The argument offered by Has$^{_{^{^\prime}}}\!$minski\u{\i} in the proof that
\ref{H3}$\,\Rightarrow\,$\ref{H2} relies crucially on the Harnack property,
and therefore is not applicable for the jump diffusions considered here.
In the context of general Markov processes,
existence of a solution to \cref{E-H2} is related to the
$f$-regularity of the process. 
For recent work on this, see \cite{Kontoyiannis-16}.

In some sense, \ref{H2} is a very mild assumption, since in any application one would
first need to establish that $\varrho_*$ is finite, and the natural venue
for this is via the Foster--Lyapunov equation in \cref{E-H2}.
A typical example is when $\nu_x$ is translation invariant,
$a$ has sublinear growth, and for some $\theta\in[1,2]$,
$\int_\Rd \abs{y}^\theta\nu(\D{y})<\infty$,
$\rc_{\Hat{v}}$ grows at most as $\abs{x}^{2(\theta-1)}$,
and there exist a positive definite symmetric matrix $S$,
and positive constants $c_0$ and $c_1$
such that $\langle b_{\Hat{v}}(x),Sx\rangle \le c_0 - c_1\abs{x}^\theta$.
Then \cref{E-H2} holds with $\Lyap(x)= \langle x,Sx\rangle^{\nicefrac{\theta}{2}}$.
For other examples, see \cite[Corollary~5.1]{APS17}.

Consider the class of $\nu_x$ that are either translation invariant and
have compact support, or satisfy the hypotheses of \cref{C2.8}, and denote
it by $\mathfrak{N}_0$ for convenience.
For $\nu_x\in\mathfrak{N}_0$, we bridge the gap between \ref{H2} and \ref{H3}
in \cref{T3.7} by establishing
the existence of a solution to the Poisson equation, and thus showing
that \ref{H3}$\,\Rightarrow\,$\ref{H2}, albeit for a function $\Lyap\in\Sobl^{2,p}(\Rd)$.
This however is enough to relax \ref{H2}
in asserting the existence of a solution to the
ergodic HJB for $\nu_x\in\mathfrak{N}_0$ (\cref{T3.8}).
Moroever, the proof of \cref{T3.8} contains an analytical argument which shows
that \ref{H2}$\,\Rightarrow\,$\ref{H3}, provided that $\nu_x\in\mathfrak{N}_0$,
and $\rc_{\Hat{v}}$ is inf-compact.

We need the following simple assertion.

\begin{lemma}\label{L3.1}
Let $\mu_v$ be an infinitesimal invariant measure under $v\in\Ussm$.
Then \cref{E-infmeas} holds for all $\varphi\in\Sobl^{2,p}(\Rd)\cap\Cc_c(\Rd)$, $p>d$.
In addition, if $\varphi\in\Sobl^{2,p}$, $p>d$, is inf-compact,
and such that $\Ag_v \varphi$ is nonpositive
a.e.\ on the complement of some ball $\sB\subset\Rd$, then
$\mu_v\bigl(\abs{\Ag_v \varphi}\bigr)<\infty$.
\end{lemma}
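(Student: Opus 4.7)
The plan is to prove both parts by approximation: extend \cref{E-infmeas} from $\sC$ to larger test-function classes via mollification for part 1, and use a concave truncation of $\varphi$ for part 2 that compactifies the support while preserving the sign of $\Ag_v\varphi$.

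For part 1, I would first extend \cref{E-infmeas} to all of $\Cc_c^2(\Rd)$. For $f\in\Cc_c^2(\Rd)$ with $\supp f\subset B_R$, approximate by $f_n\in\sC$ in $\Cc_0^2$-norm. Continuity of $f\mapsto\int\Ag_v f\,\D\mu_v$ in $\Cc^2$-norm on functions supported in a fixed ball holds because on $B_{R+1}$ the coefficients $a^{ij}$ and $\hat b^i_v$ are locally bounded, and \cref{A1.1}(b)--(c) yields the uniform estimate $|\cI f(x)|\le C_R\|f\|_\infty$ (for $|x|\le R+1$ via local boundedness of $\bm\nu$, and for $|x|>R+1$ via the bound on $\nu_x(B_{R+1}-x)$). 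Then mollify: set $\varphi_\epsilon\df\varphi*\rho_\epsilon\in\Cc_c^\infty(B_{R+1})$ and invoke \cref{E-infmeas} at $\varphi_\epsilon$. Passage $\epsilon\to 0$ uses: (i) the Sobolev embedding $\Sobl^{2,p}\hookrightarrow\Cc^{1,\alpha}_{\mathrm{loc}}$ (since $p>d$) for uniform convergence of $\varphi_\epsilon$ and $\nabla\varphi_\epsilon$; (ii) $\partial_{ij}\varphi_\epsilon\to\partial_{ij}\varphi$ in $\Lp^p(B_{R+1})$ paired with $\upphi\in\Lpl^{p'}$ from \cref{T2.5} (where $p'=\tfrac{p}{p-1}<\tfrac{d}{d-2}$) to handle the $a^{ij}\partial_{ij}$ term via H\"older; and (iii) the uniform bound $|\cI(\varphi_\epsilon-\varphi)|\le C\|\varphi_\epsilon-\varphi\|_\infty$ for the nonlocal part.

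For part 2, without loss of generality take $\varphi\ge 0$. Pick a family of concave $\eta_N\in\Cc^2(\RR)$ with $\eta_N(t)=t$ for $t\le N$, $\eta_N\equiv c_N$ for $t\ge N+1$, $0\le\eta_N'\le 1$ with $\eta_N'(t)\nearrow 1$ as $N\to\infty$, and $|\eta_N''|$ uniformly bounded in $N$. Since $\varphi$ is inf-compact, $\tilde\varphi_N\df\eta_N(\varphi)-c_N$ has compact support and, by the chain rule, lies in $\Sobl^{2,p}\cap\Cc_c$; part 1 gives $\int\Ag_v\eta_N(\varphi)\,\D\mu_v=0$. Concavity of $\eta_N$ combined with positive-definiteness of $a$ yields
\begin{equation*}
\Lg_v\eta_N(\varphi)\;=\;\eta_N'(\varphi)\Lg_v\varphi+\eta_N''(\varphi)\,a^{ij}\partial_i\varphi\partial_j\varphi\;\le\;\eta_N'(\varphi)\Lg_v\varphi\,,
\end{equation*}
and the secant inequality $\eta_N(b)-\eta_N(a)\le\eta_N'(a)(b-a)$ yields $\cI\eta_N(\varphi)\le\eta_N'(\varphi)\cI\varphi$. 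Hence $\Ag_v\eta_N(\varphi)\le\eta_N'(\varphi)\Ag_v\varphi$ pointwise, which is $\le 0$ on $\sB^c$, so $(\Ag_v\eta_N(\varphi))^+$ is supported in $\sB$. For $N>\sup_\sB\varphi$ one has $\Lg_v\eta_N(\varphi)=\Lg_v\varphi$ on $\sB$, giving the uniform bound $|\Lg_v\eta_N(\varphi)|\le|\Lg_v\varphi|$ there; the $1$-Lipschitz estimate $|\cI\eta_N(\varphi)(x)|\le\int|\varphi(x+y)-\varphi(x)|\,\nu_x(\D y)$ provides the uniform nonlocal bound. Combining $\int\Ag_v\eta_N(\varphi)\,\D\mu_v=0$ with the pointwise inequality $(\Ag_v\eta_N(\varphi))^-\ge\eta_N'(\varphi)(\Ag_v\varphi)^-$ yields
\begin{equation*}
\int\eta_N'(\varphi)(\Ag_v\varphi)^-\,\D\mu_v \;\le\; \int_\sB(\Ag_v\eta_N(\varphi))^+\,\D\mu_v \;\le\; C\,,
\end{equation*}
uniformly in $N$. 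Monotone convergence $\eta_N'(\varphi)\nearrow 1$ then concludes $\mu_v((\Ag_v\varphi)^-)<\infty$, and the analogous bound on $\mu_v((\Ag_v\varphi)^+)$ (supported in $\sB$ and controlled by the same ingredients) gives $\mu_v(|\Ag_v\varphi|)<\infty$.

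The main obstacle I anticipate is the extension of \cref{E-infmeas} in part 1 from the countable dense $\sC$ to $\Cc_c^2$: naively, approximants in $\sC$ may have supports extending beyond $B_{R+1}$, and truncation by a cutoff takes one outside $\sC$, so one must either pre-select $\sC$ to be dense in each $\Cc_c^2(B_R)$, or carefully combine a cutoff with the identity on $\sC$. The mollification in part 1 and the chain-rule/concavity analysis in part 2 are then largely routine, with the key conceptual idea for part 2 being that composition with a concave function preserves the supersolution structure for both the local ($\Lg_v$) and nonlocal ($\cI$) parts of $\Ag_v$, converting the sign hypothesis on $\Ag_v\varphi$ into $\Lp^1(\mu_v)$-integrability of $|\Ag_v\varphi|$.
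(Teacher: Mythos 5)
Your proof follows essentially the same route as the paper's: part 1 by mollification combined with the local integrability of the density of $\mu_v$ from \cref{T2.5} and the nonlocal bounds in \cref{A1.1}(b)--(c), and part 2 by composing $\varphi$ with a concave truncation (your $\eta_N$ is the paper's $\chi^{}_R$) so that both the diffusion and jump parts of $\Ag_v$ acquire a favorable sign, then passing to the limit. The details you supply, including the secant inequality for $\cI$ and the monotone-convergence step, match the paper's argument, so no further commentary is needed.
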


\begin{proof}
In the interest of simplicity,
we drop the explicit dependence on $v$ in the notation.
Suppose $\varphi\in\Sobl^{2,p}(\Rd)\cap\Cc_c(\Rd)$, $p>d$.
Let $\rho$ be a symmetric non-negative mollifier supported on the unit ball
centered at the origin, and for $\epsilon>0$, let $\rho_{\epsilon}(x)\df
r^{-d}\rho (\frac{x}{\epsilon})$, and
$\varphi_\epsilon\df \rho_\epsilon*\varphi$, where `$*$' denotes convolution.
Then, $\mu(\Ag \varphi_\epsilon)=0$ by \cref{E-infmeas}.
Since $\partial_{ij}\varphi_\epsilon$ converges to $\partial_{ij}\varphi$ as
$\epsilon\searrow0$ in
$\Lp^p(B_R)$ for any $p>1$ and $R>0$, and since $\mu$ has a density
in $\Lpl^p(\Rd)$ for $p<\frac{d}{d-2}$ by \cref{T2.5}, it follows by
H\"older's inequality that
$\int_\Rd \abs{a^{ij}}
\abs{\partial_{ij}\varphi-\partial_{ij}\varphi_\epsilon}\,\D{\mu}\to0$
as $\epsilon\searrow0$.
Also, since $\partial_i \varphi-\partial_i \varphi_\epsilon$ converges uniformly to $0$,
and in view of \cref{A1.1}~(b) and (c), we obtain
$\mu(\Hat{b}^i\partial_{i}\varphi_\epsilon)
\to\mu(\Hat{b}^i\partial_{i}\varphi)$, and
$\mu(\cI\varphi_\epsilon)\to\mu(\cI \varphi)$ as $\epsilon\searrow0$.
This shows that $\mu(\Ag \varphi)=0$.

We now turn to the second statement of the lemma.
Let $\chi$ be a concave $\Cc^2(\Rd)$ function such that
$\chi(x)= x$ for $x\le0$, and $\chi(x) = 1$ for $x\ge 1$.
Then $\chi'$ and $-\chi''$ are nonnegative on $(0,1)$.
Define $\chi_{R}^{}(x) \df R + \chi(x-R)$ for $R>0$,
and observe that $\chi_R^{}(\varphi)-R-1$ is compactly supported by construction.
We have
\begin{equation}\label{PL3.1A}
\Ag \chi_R^{} (\varphi) \;=\; \chi_R^{\prime}(\varphi)\,\Ag \varphi
+ \chi_R^{\prime\prime}(\varphi)\,\langle\grad \varphi, a\grad \varphi\rangle
- \bigl(\chi_R^{\prime}(\varphi)\cI \varphi - \cI \chi_R(\varphi)\bigr)\,.
\end{equation}
Note that the second and third terms on the right hand side of \cref{PL3.1A}
are nonpositive.
Thus, selecting $R$ sufficiently large so that
$\Ag \varphi$ is nonpositive on the complement of $B_R$,
and integrating with respect to
$\mu$, we first obtain $\mu\bigl((\Ag \varphi)^+\bigr)<\infty$, and using
this together with \cref{PL3.1A} the result follows.
\end{proof}

\subsection{The \texorpdfstring{$\alpha$}{}-discounted HJB equation}

We have the following theorem.

\begin{theorem}\label{T3.2}
Assume \textup{\ref{H1}}--\textup{\ref{H2}}.
For any $\alpha\in(0,1)$, there exists a minimal nonnegative solution
$V_\alpha\in\Sobl^{2,p}(\RR^d)$, $p>1$, to the HJB equation
\begin{equation}\label{ET3.2A}
\min_{z\in\Act}\;\bigl[\Ag_{z}\,V_\alpha(x) + \rc(x,z)\bigr] \;=\; \alpha V_\alpha(x)\,.
\end{equation}
Moreover, $\inf_{\Rd}\,\alpha V_\alpha\le\varrho_*$, and this
infimum is attained in the set
$$\varGamma_\circ\;\df\;\Bigl\{x\in\Rd\,\colon \sup_{z\in\Act}\,\rc(x,z)\le \varrho_*
\Bigr\}\,.$$
\end{theorem}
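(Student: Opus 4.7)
The plan is to construct $V_\alpha$ by exhausting $\Rd$ with balls and then read off both quantitative claims by combining the HJB with the infinitesimal invariance identity of \cref{L3.1}.

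\textbf{Construction.} For each $R>0$ I would first solve the truncated Dirichlet problem
$$\min_{z\in\Act}\bigl[\Ag_z V_\alpha^R(x)+\rc(x,z)\bigr] \;=\; \alpha V_\alpha^R(x),\quad x\in B_R,\qquad V_\alpha^R \equiv 0 \text{ on } B_R^c,$$
by a monotone iteration starting from $V\equiv 0$. At each step I linearize along the measurable selector achieving the min and invoke standard $\Sob^{2,p}(B_R)$ solvability for the local operator $\Lg_v-\alpha I$, treating $\cI V$ as a lower-order source in $\Lp^\infty(B_R)$, bounded by $2\bm\nu(x)\norm{V}_{\Lp^\infty(\Rd)}$ (with $\bm\nu$ locally bounded by \cref{A1.1}(b)). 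A uniform upper bound comes from \ref{H2}: the nonnegative function $\Lyap+\alpha^{-1}\kappa_0$ is a supersolution of the HJB on all of $\Rd$, so weak comparison---in the class of bounded nonnegative solutions distinguished by \ref{H1}---gives $V_\alpha^R\le \Lyap+\alpha^{-1}\kappa_0$ on $B_R$. The same comparison between $V_\alpha^R$ and $V_\alpha^{R'}$ for $R'>R$ yields monotonicity in $R$; coupled with interior $\Sob^{2,p}$ estimates, this produces a limit $V_\alpha\in\Sobl^{2,p}(\Rd)$ solving \cref{ET3.2A}. Minimality is automatic, because any nonnegative solution $U$ is a supersolution on each $B_R$ with $U\ge 0 = V_\alpha^R$ on $B_R^c$.

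\textbf{The two quantitative claims.} Let $v_*\in\Ussm$ be an optimal stable control with invariant measure $\mu_{v_*}$ and $\mu_{v_*}(\rc_{v_*})=\varrho_*$, whose existence is guaranteed by \cref{S2.2}. Evaluating the HJB along $v_*$ gives $\Ag_{v_*} V_\alpha + \rc_{v_*}\ge \alpha V_\alpha$. I would apply the concave truncation $\chi_R(V_\alpha)-R-1 \in \Sobl^{2,p}(\Rd)\cap\Cc_c(\Rd)$ from the proof of \cref{L3.1}: substituting into \cref{PL3.1A} and integrating against $\mu_{v_*}$, both the quadratic-form term (supported on the shrinking set $\{R<V_\alpha<R+1\}$) and the nonlocal Jensen defect $\chi_R'(V_\alpha)\cI V_\alpha - \cI\chi_R(V_\alpha)$ (controlled by dominated convergence using $V_\alpha\le \Lyap+\alpha^{-1}\kappa_0$ together with \cref{A1.1}(c)) vanish in the limit $R\to\infty$, yielding $\mu_{v_*}\bigl(\Ag_{v_*}V_\alpha\bigr)=0$. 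Integrating the HJB then gives $\alpha\,\mu_{v_*}(V_\alpha)\le\varrho_*$, hence $\alpha\inf_{\Rd}V_\alpha\le\varrho_*$. For the location, near-monotonicity of $\rc$ combined with the Lyapunov bound forces the infimum of $\alpha V_\alpha$ to be attained at some $x_0\in\Rd$; at $x_0$ we have $\grad V_\alpha(x_0)=0$, $a^{ij}(x_0)\partial_{ij}V_\alpha(x_0)\ge 0$, and $\cI V_\alpha(x_0)\ge 0$, so $\Ag_z V_\alpha(x_0)\ge 0$ for every $z\in\Act$. Evaluating the HJB at $x_0$ yields $\rc(x_0,z)\le\alpha V_\alpha(x_0)\le\varrho_*$ for the minimizing selector $z$, placing $x_0$ in $\Gamma_\circ$.

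\textbf{Main obstacle.} The most delicate step is converting \ref{H1} into a usable comparison principle for the nonlinear nonlocal HJB on balls with exterior zero data, without appealing to any stochastic representation. The natural route is to linearize along the selector, apply the weak maximum principle for the local operator $\Lg_v-\alpha I$ to the difference $V_\alpha^R-(\Lyap+\alpha^{-1}\kappa_0)$, and use \ref{H1} in the limit $R\to\infty$ to exclude nontrivial bounded nonnegative solutions of the homogeneous problem. A secondary technical point is verifying that the two error terms in the concave-truncation argument genuinely vanish as $R\to\infty$; this requires controlled growth of $V_\alpha$ and local integrability of $\nu_x$-tail masses, both guaranteed by the bound $V_\alpha\le\Lyap+\alpha^{-1}\kappa_0$ and by \cref{A1.1}.
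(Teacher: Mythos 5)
Your construction of $V_\alpha$ (truncated Dirichlet problems \cref{PT3.2A} with zero exterior data, the supersolution $\kappa_0/\alpha+\Lyap$ supplied by \ref{H2}, interior $\Sob^{2,p}$ estimates, passage to the limit, minimality by comparison on each ball) is essentially the paper's argument and is fine; note only that \ref{H1} is not needed there, since the comparison $\psi_{\alpha,R}\le \kappa_0/\alpha+\Lyap$ is performed on the bounded domain $B_R$ where the exterior data are zero, so the weak maximum principle suffices. The genuine gaps are in the two quantitative claims. For $\inf_\Rd \alpha V_\alpha\le\varrho_*$, your test function $\chi_R(V_\alpha)-R-1$ is compactly supported only if $V_\alpha$ is inf-compact, which is not known at this stage: all you have is $0\le V_\alpha\le \kappa_0/\alpha+\Lyap$, and $V_\alpha$ may well be bounded, in which case $\chi_R(V_\alpha)-R-1$ is nowhere zero for large $R$. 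Consequently \cref{L3.1} cannot be invoked to give $\mu_{v_*}\bigl(\Ag_{v_*}V_\alpha\bigr)=0$ (and an exact identity is not what truncation arguments yield anyway; one only needs, and only gets, a one-sided bound). The paper avoids this by truncating the \emph{approximants} rather than the limit: it applies the convex, bottom-flattened function $\Tilde\chi(x)=-\chi\bigl(\tfrac{\varrho_*}{2}+1-x\bigr)$ to $\psi_{\alpha,R}$, which is compactly supported by construction; this choice also repairs the fact that $\psi_{\alpha,R}\notin\Sobl^{2,p}(\Rd)$ across $\partial B_R$, because $\Tilde\chi(\psi_{\alpha,R})$ is constant near the boundary. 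Convexity gives $\Ag_v\Tilde\chi(\psi_{\alpha,R})\ge\Tilde\chi'(\psi_{\alpha,R})\,(\alpha\psi_{\alpha,R}-\rc_v)$, and integrating against $\mu_v$ and letting $R\to\infty$ by monotone convergence yields $\alpha m_\alpha\le\mu_v(\rc_v)$ for every $v\in\Ussm$, hence the claim.

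The second claim is where your proposal is weakest. You assert that near-monotonicity of $\rc$ together with the Lyapunov bound forces the infimum of $V_\alpha$ to be attained at some $x_0\in\Rd$, but nothing in what you have proved excludes a minimizing sequence escaping to infinity; moreover, even granting an interior minimizer, the pointwise evaluations $a^{ij}(x_0)\partial_{ij}V_\alpha(x_0)\ge0$ and $\cI V_\alpha(x_0)\ge0$ require justification for a function that is only in $\Sobl^{2,p}$. This is precisely the step where the paper actually uses \ref{H1}: it introduces the auxiliary Dirichlet problems \cref{PT3.2C} along a measurable selector $\Tilde v$, with constant exterior data $\alpha^{-1}(\varrho_*+\epsilon_\circ)$, shows the limit $\Tilde V_\alpha$ differs from $V_\alpha$ by a bounded nonnegative function $u$ solving $\Ag_{\Tilde v}u-\alpha u=0$, and concludes $u\equiv0$ from \ref{H1} and the strong maximum principle; the minima of the approximants $\Tilde\psi_{\alpha,R}$ are then localized in the compact sets $\varGamma_\epsilon=\{\rc_{\Tilde v}\le\varrho_*+\epsilon\}$ before letting $R\to\infty$ and $\epsilon\searrow0$. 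Your proposal omits this mechanism entirely and instead spends \ref{H1} on the construction step, where it is not needed; without it, the statement that the infimum is attained in $\varGamma_\circ$ remains unproved.
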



\begin{proof}
Establishing the existence of a solution is quite standard.
One starts by exhibiting a solution
$\psi_{\alpha,R}\in\Sob^{2,p}(B_R)\cap\Cc(\Rd)$
to the Dirichlet problem
\begin{equation}\label{PT3.2A}
\begin{cases}
\min_{z\in\Act}\;\bigl[\Ag_z\psi_{\alpha,R}(x)
+ \rc(x,z)\bigr]\;=\;\alpha\psi_{\alpha,R}(x)
& x\in B_R\,,\\
\psi_{\alpha,R}(x)\;=\;0& x\in B_R^c\,,
\end{cases}
\end{equation}
for any $\alpha\in(0,1)$ and $R>0$.

We use \cref{D2.4} to write $\Ag=\widetilde\Lg + \widetilde\cI$.
Applying the well-known interior estimate in \cite[Theorem~9.11]{GilTru},
for any fixed $r>0$, we obtain
\begin{equation*}
\bnorm{\psi_{\alpha,R}}_{\Sob^{2,p}(B_{r})}\le
C\Bigl(\bnorm{\psi_{\alpha,R}}_{\Lp^p(B_{2r})}+
\bnorm{\rc_{v_\alpha}
+\widetilde\cI\,\psi_{\alpha,R}}_{\Lp^p(B_{2r})}\Bigr)
\end{equation*}
for some constant $C= C(r,p)$.
Here, $v_\alpha$ is a measurable selector from the minimizer of
the $\alpha$-discounted HJB in \eqref{ET3.2A}.
Using the comparison principle and \ref{H2}, it is straightforward to show that
$\psi_{\alpha,R} \le \frac{\kappa_\circ}{\alpha} + \Lyap$ on $\Rd$.
Thus
$\{\psi_{\alpha,R}\}$ is bounded in $\Sob^{2,p}(B_{r})$, uniformly in $R$.
We then take limits as $R\to\infty$ to obtain a function
$V_\alpha\in\Sobl^{2,p}(\RR^d)$ which solves \cref{ET3.2A}.

Let $m_\alpha\df \inf_\Rd\,V_\alpha$.
We claim that $\alpha m_\alpha\le\varrho_*$.
Suppose on the contrary that $\alpha m_\alpha>\varrho_*$.
Let $v\in\Ussm$.
Recall the function $\chi$ in the proof of \cref{L3.1}, and let
$\Tilde\chi(x) \df  - \chi(\frac{\varrho_*}{2}+1-x)$.
Note that $\Tilde\chi^{\prime\prime}\ge0$,
and $\Tilde\chi^{\prime}(\psi_{\alpha,R})\cI \psi_{\alpha,R}
- \cI \Tilde\chi(\psi_{\alpha,R})\le 0$.
Thus, using \cref{PT3.2A} and repeating the calculation in \cref{PL3.1A}
we obtain
\begin{equation*}
\Ag_v \Tilde\chi(\psi_{\alpha,R}) \;\ge\;
\Tilde\chi^{\prime}(\psi_{\alpha,R})\,\Ag_v \psi_{\alpha,R}
\;\ge\;
\Tilde\chi^{\prime}(\psi_{\alpha,R})\,\bigl(\alpha \psi_{\alpha,R} - \rc_v\bigr)\,.
\end{equation*}
It is clear that $\Tilde\chi(\psi_{\alpha,R})\in\Sobl^{2,p}(\Rd)\cap\Cc_c(\Rd)$,
for any $p>1$.
Hence, integrating with respect to $\mu_v$, applying \cref{L3.1}, and
taking limits as $R\to\infty$, using monotone convergence, we obtain
$\alpha m_\alpha\le\mu_v(\alpha V_\alpha) \le \mu_v(\rc_v)$.
Taking the infimum over $v\in\Ussm$ contradicts the hypothesis
that $\alpha m_\alpha>\varrho_*$, and thus proves the claim.

Recall the definition $\epsilon_\circ$ in \cref{S2.2}.
Let $\Tilde{v}\in\Usm$ be a measurable selector from the minimizer
of \cref{PT3.2A} and consider the Dirichlet problem
\begin{equation}\label{PT3.2C}
\begin{cases}
\Ag_{\Tilde{v}}\Tilde\psi_{\alpha,R}(x)
+ \rc_{\Tilde{v}}(x)\;=\;\alpha\Tilde\psi_{\alpha,R}(x)
& x\in B_R\,,\\
\Tilde\psi_{\alpha,R}(x)\;=\;\alpha^{-1}(\varrho_*+\epsilon_\circ)& x\in B_R^c\,,
\end{cases}
\end{equation}
for $\alpha\in(0,1)$ and $R>0$.
Arguing as in the derivation of \cref{PT3.2A},
it follows that $\Tilde\psi_{\alpha,R}$ converges,
as $R\to\infty$, to some
$\Tilde{V}_{\alpha}\in\Sobl^{2,p}(\RR^d)$ which solves
$\Ag_{\Tilde{v}}\Tilde{V}_{\alpha}
+ \rc_{\Tilde{v}}(x)\;=\;\alpha\Tilde{V}_{\alpha}$ on $\Rd$.
It is clear that $u=\Tilde{V}_{\alpha}-V_\alpha$ is nonnegative and
bounded. Since $\Ag_{\Tilde{v}}u -\alpha u=0$ on $\Rd$, it follows by
\ref{H1} that $u$ cannot be strictly positive, and, in turn, by the strong
maximum principle it has to be identically zero.
Thus, given $\epsilon<\epsilon_\circ$ there exists $R_\epsilon$ such
that $\min_{B_R}\,\alpha \Tilde\psi_{\alpha,R}<\varrho_*+\epsilon$
for all $R>R_\epsilon$.
It follows by \cref{PT3.2C} that $\Tilde\psi_{\alpha,R}$
attains its minimum in the set
$\varGamma_\epsilon\df\{x\in\Rd\,\colon \rc_{\Tilde{v}}(x)\le \varrho_*+\epsilon\}$
for all $R>R_\epsilon$, and therefore, the same applies to $\Tilde{V}_{\alpha}$.
Since $\epsilon>0$ is arbitrary, we conclude that
$\Tilde{V}_{\alpha}$ attains its infimum in the set
$\{x\in\Rd\,\colon \rc_{\Tilde{v}}(x)\le \varrho_*\}\subset\varGamma_\circ$,
and this completes the proof.
\end{proof}

\subsection{The ergodic HJB equation}

We start with the main convergence result of the paper which
establishes solutions to the ergodic HJB via the vanishing discount method.
To guide the reader, the technique of the proof consists of writing the
operator in the form $\widetilde\Lg + \widetilde\cI$, and obtaining
estimates for supersolutions of the local operator $\widetilde\Lg$ using the results
in \cite[Corollary~2.2]{AA-Harnack}.

\begin{theorem}\label{T3.3}
Grant the hypotheses of \cref{T3.2}, and
let $V_\alpha$, $\alpha\in(0,1)$, be the family of solutions in that theorem.
Then, as $\alpha\searrow0$,
$V_\alpha-V_\alpha(0)$ converges in $\Cc^{1,r}(\overline B_R)$
for any $r\in(0,1)$ and $R>0$, to a function $V\in\Sobl^{2,p}(\Rd)$
for any $p>1$, 
which is bounded from below in $\Rd$ and solves
\begin{equation}\label{ET3.3A}
\min_{z\in\Act}\;\bigl[\Ag_{z}\,V(x) + \rc(x,z)\bigr] \;=\; \varrho\,,
\end{equation}
with $\varrho=\varrho_*$.
Also $\alpha V_\alpha(x)\to \varrho_*$ uniformly on compact sets.
In addition, the solution of \cref{ET3.3A} with $\varrho=\varrho_*$ is unique
in the class of functions  $V\in\Sobl^{2,d}(\Rd)$, satisfying $V(0)=0$,
which are bounded from below in $\Rd$.
For $\varrho<\varrho_*$, there is no such solution.
\end{theorem}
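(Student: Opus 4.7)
The plan is to follow the vanishing discount approach, but since the Harnack property fails for $\Ag$ (cf.\ the example in \cref{S1}), I would split the operator as $\Ag=\widetilde\Lg+\widetilde\cI$ and extract uniform-in-$\alpha$ local $L^\infty$ bounds by invoking a weak Harnack estimate for the uniformly elliptic part $\widetilde\Lg$ only, leveraging the sign $\widetilde\cI u\ge 0$ for $u\ge 0$. Normalize by $m_\alpha\df\inf_{\Rd} V_\alpha$, and set $\bar V_\alpha\df V_\alpha-m_\alpha\ge 0$; by \cref{T3.2}, $\alpha m_\alpha\le\varrho_*$ and $m_\alpha$ is attained on the compact set $\varGamma_\circ$. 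Let $v_\alpha\in\Usm$ be a measurable selector from the minimizer in \cref{ET3.2A}. Rewriting the HJB along $v_\alpha$ yields
\begin{equation*}
\widetilde\Lg_{v_\alpha}\bar V_\alpha \;=\; \alpha\bar V_\alpha + \alpha m_\alpha
- \rc_{v_\alpha} - \widetilde\cI\bar V_\alpha\,,
\end{equation*}
and since $\widetilde\cI\bar V_\alpha\ge 0$ and $\rc\ge 0$, the right hand side is bounded above by $\alpha\bar V_\alpha+\varrho_*$. Applying the weak Harnack inequality of \cite[Corollary~2.2]{AA-Harnack} to $\bar V_\alpha$ on any ball containing both $0$ and $\varGamma_\circ$, together with $\inf_{\varGamma_\circ}\bar V_\alpha=0$, then produces local $L^\infty$ bounds on $\bar V_\alpha$ uniform in $\alpha\in(0,1)$.

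Once $\bar V_\alpha$ is locally bounded uniformly in $\alpha$, \cref{A1.1}(c) renders $\widetilde\cI\bar V_\alpha$ locally bounded uniformly in $\alpha$ as well, and the interior estimate \cite[Theorem~9.11]{GilTru} yields uniform bounds in $\Sobl^{2,p}(\Rd)$ for every $p>1$. Sobolev embedding and a diagonal argument then extract a subsequential limit $V\in\Sobl^{2,p}(\Rd)$ in $\Cc^{1,r}_{\mathrm{loc}}(\Rd)$; continuity of the Hamiltonian combined with a measurable-selection argument lets one pass to the limit in the minimum, so $V$ satisfies \cref{ET3.3A} with $\varrho=\lim_{\alpha\to 0}\alpha m_\alpha\le\varrho_*$. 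Since $\alpha\bar V_\alpha\to 0$ uniformly on compacta, $\alpha V_\alpha\to\varrho$ uniformly on compacta, and re-normalizing by $V_\alpha(0)$ in place of $m_\alpha$ delivers the stated form of convergence.

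For the identification $\varrho=\varrho_*$ and the nonexistence assertion, suppose $V$ solves \cref{ET3.3A} for some $\varrho$ with $V$ bounded below, and set $\bar V\df V-\inf V\ge 0$. For any $v\in\Ussm$ with $\mu_v\in\Inv$, \cref{ET3.3A} gives $\Ag_v\bar V+\rc_v\ge\varrho$. Using the concave cutoff $\chi_R^{}$ from the proof of \cref{L3.1} and repeating the calculation in \cref{PL3.1A}, we obtain $\Ag_v\chi_R^{}(\bar V)\ge\chi_R^{\prime}(\bar V)(\varrho-\rc_v)$; integrating against $\mu_v$ via \cref{L3.1} and sending $R\to\infty$ by monotone convergence yields $\uppi_v(\rc)\ge\varrho$, and infimizing over $\eom$ forces $\varrho\le\varrho_*$. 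This simultaneously rules out $\varrho<\varrho_*$ and pins down $\varrho=\varrho_*$ in the previous paragraph. Uniqueness in the class of $V\in\Sobl^{2,d}(\Rd)$ with $V(0)=0$ and bounded below then follows by applying the same calculation to $w\df V_2-V_1$ along the minimizing selectors for $V_1$ and $V_2$, combined with the strong maximum principle.

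The central obstacle is obtaining uniform local bounds on $\bar V_\alpha$ without a Harnack inequality for $\Ag$ itself; the resolution is the splitting $\Ag=\widetilde\Lg+\widetilde\cI$ together with the observation that $\widetilde\cI\bar V_\alpha\ge 0$, which converts the task into a weak Harnack estimate for the uniformly elliptic local operator $\widetilde\Lg$, as provided by \cite[Corollary~2.2]{AA-Harnack}.
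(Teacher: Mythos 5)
Your overall architecture (normalize, obtain uniform local bounds, interior $W^{2,p}$ estimates, pass to the limit, then identify $\varrho$ and prove uniqueness) matches the paper's, but the pivotal step---the uniform local $L^\infty$ bound on $\bar V_\alpha$---does not go through as you describe. The sign $\widetilde\cI\bar V_\alpha\ge0$ only makes $\bar V_\alpha$ a \emph{supersolution} of the local equation: $\widetilde\Lg_{v_\alpha}\bar V_\alpha-\alpha\bar V_\alpha\le\varrho_*$. The weak Harnack inequality for nonnegative supersolutions (this is what \cite[Theorem~9.22]{GilTru}, or the supersolution half of \cite[Corollary~2.2]{AA-Harnack}, provides) controls $\norm{\bar V_\alpha}_{p;D}$ for some \emph{small} $p>0$ in terms of $\inf\bar V_\alpha+\varrho_*$; it does not bound $\sup\bar V_\alpha$. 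To bound the supremum you need the local maximum principle for \emph{subsolutions}, and $\bar V_\alpha$ is a subsolution of $\widetilde\Lg_{v_\alpha}u-\alpha u=\alpha m_\alpha-\rc_{v_\alpha}-\widetilde\cI\bar V_\alpha$, whose right-hand side contains $-\widetilde\cI\bar V_\alpha(x)=-\int\bar V_\alpha(x+y)\,\nu_x(\D y)$---precisely the quantity you do not control, since it samples $\bar V_\alpha$ at faraway points. A full Harnack inequality is likewise unavailable for $\bar V_\alpha$ because it is not a solution of a local equation with controlled source. This is exactly where the paper has to work: it first uses \ref{H2} to get the \emph{global} barrier $\varphi_\alpha\le\norm{\varphi_\alpha}_{\Lp^\infty(\sB_\circ)}+\Lyap$ (your proposal never uses $\Lyap$ at this stage), which converts the tail of $\widetilde\cI\varphi_\alpha$ into a multiple of the local sup plus a fixed constant; it then runs a dichotomy (\cref{PT3.3G} versus its complement), and in each case introduces an auxiliary Dirichlet problem ($\widetilde\Lg_{v_\alpha}\psi_\alpha-\alpha\psi_\alpha=-\widetilde\cI\varphi_\alpha$, resp.\ $=0$, with boundary data $\varphi_\alpha$), applies the Harnack inequality of \cite[Corollary~2.2]{AA-Harnack} to that nonnegative \emph{solution}, and controls $\varphi_\alpha-\psi_\alpha$ by the ABP estimate. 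Without some version of this (or another mechanism to dominate $\widetilde\cI\bar V_\alpha$), the uniform bound \cref{PT3.3C} is unproved and the rest of the argument has nothing to stand on.

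A secondary issue: in the identification/nonexistence step you integrate $\Ag_v\chi_R^{}(\bar V)\ge\chi_R'(\bar V)(\varrho-\rc_v)$ against $\mu_v$, but the concave truncation $\chi_R^{}$ gives the inequality in the \emph{opposite} direction ($\Ag\chi_R^{}(\varphi)$ equals $\chi_R'(\varphi)\Ag\varphi$ plus nonpositive terms); for a lower bound you need the convex truncation $\Tilde\chi$ from the proof of \cref{T3.2}, and even then $\Tilde\chi(\bar V)$ is only bounded below, not compactly supported, so \cref{L3.1} does not apply directly. The paper sidesteps this by obtaining $\varrho\ge\varrho_*$ from linear-programming duality and by proving uniqueness (and nonexistence for $\varrho<\varrho_*$) through a comparison of $(1+\epsilon)\widetilde V$ with $\varphi_\alpha$ followed by the strong maximum principle.
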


\begin{proof}
Recall the definitions of $\sB_\circ$ and $\epsilon_\circ$ in \cref{S2.2}.
Fix an arbitrary ball $\sB\subset\Rd$ such that $\sB_\circ\subset\sB$.
Since $\Lyap$ and $V_\alpha$
are a supersolution and subsolution of $\Ag_{\Hat{v}}u -\alpha u = -\rc_{\Hat{v}}$
on $\sB^c$ by \cref{E-H2}, respectively, it follows that the solution $V_\alpha$
of \cref{ET3.2A} satisfies
\begin{equation}\label{PT3.3A}
V_\alpha(x) \;\le\; \sup_{\sB}\;V_\alpha + \Lyap(x)\qquad \forall\,x\in\Rd\,.
\end{equation}
By \cref{T3.2} we have
$\inf_\Rd\, V_\alpha = \min_{\sB_\circ}\, V_\alpha$ for all $\alpha\in(0,1)$.
For each $\alpha\in(0,1)$, we
fix some point $\Hat{x}_\alpha\in\Argmin V_\alpha\subset\sB_\circ$.
Consider the function $\varphi_\alpha\df V_\alpha - V_\alpha(\Hat{x}_\alpha)$.
Then \cref{PT3.3A} implies that
\begin{equation}\label{PT3.3B}
\varphi_\alpha(x) \;\le\; \norm{\varphi_\alpha}_{\Lp^\infty(\sB_\circ)}
+ \Lyap(x)\qquad \forall\,x\in\Rd\,.
\end{equation}
We  have
\begin{equation*}
\min_{z\in\Act}\;\bigl[\Ag_z\varphi_\alpha(x) -\alpha\varphi_\alpha(x)
+ \rc(x,z)\bigr] \;=\; \alpha V_\alpha(\Hat{x}_\alpha)\;\le\;\varrho_*\,,
\end{equation*}
where the last inequality follows by \cref{T3.2}.
We claim that for each $R>0$ there exists a constant
$\kappa_R$ such that
\begin{equation}\label{PT3.3C}
\norm{\varphi_\alpha}_{\Lp^\infty(B_R)}\;\le\; \kappa_R\qquad\forall\,\alpha\in(0,1)\,.
\end{equation}
To prove the claim, let $\sB\equiv B_R$, and $D_1$, $D_2$ be balls satisfying
$\sB\Subset D_1\Subset D_2$.
Recall \cref{D2.4}.
For $p>0$, let $\norm{u}_{p;Q}\df\bigl(\int_Q u(x)\,\D{x}\bigr)^{\nicefrac{1}{p}}$.
Of course, this is not a norm unless $p\ge1$, so there is a slight abuse
of notation involved in this definition.
Since $\Lyap\in\Cc^{2}(\Rd)$, hypothesis \ref{H2} implies that
$\widetilde\cI\Lyap\in\Lpl^{\infty}(\Rd)$, and the same
of course holds for $\varphi_\alpha$ by \cref{PT3.3B}.
By the local maximum principle \cite[Theorem~9.20]{GilTru}, for any $p>0$, there exists
a constant $\Tilde{C}_{1}(p)>0$ such that
\begin{equation*}
\norm{\varphi_\alpha}_{\Lp^\infty(\sB)}
\;\le\;\Tilde{C}_{1}(p)\bigl(\norm{\varphi_\alpha}_{p;D_1}
+ \norm{\widetilde\cI\,\varphi_\alpha}_{\Lp^d(D_1)}
+\norm{\rc_{v_\alpha}}_{\Lp^d(D_1)}\bigr)\,,
\end{equation*}
and by the supersolution estimate \cite[Theorem~9.22]{GilTru},
and since $\varphi_\alpha$ is nonnegative,
there exist some
$p>0$ and $\Tilde{C}_{2}>0$ such that
$\norm{\varphi_\alpha}_{p;D_1} \;\le\; \Tilde{C}_{2}\,\varrho_*\,
\abs{D_2}^{\nicefrac{1}{d}}$.
Combining these inequalities, we obtain
\begin{equation}\label{PT3.3D}
\norm{\varphi_\alpha}_{\Lp^\infty(\sB)}  \;\le\;
\Tilde{C}_{1}(p)\bigl(\Tilde{C}_{2}\,\varrho_*\,\abs{D_2}^{\nicefrac{1}{d}}
+\norm{\rc_{v_\alpha}}_{\Lp^d(D_1)}\bigr)
+ \Tilde{C}_{1}(p) \norm{\widetilde\cI\,\varphi_\alpha}_{\Lp^d(D_1)}\,.
\end{equation} 
Denote the first term on the right hand of \eqref{PT3.3D} by $\kappa_1$. 
By \cref{PT3.3B,PT3.3D} we have
\begin{align*}
\norm{\varphi_\alpha}_{\Lp^\infty(D_2)} \;&\le\;
\norm{\Lyap}_{\Lp^\infty(D_2)} + \norm{\varphi_\alpha}_{\Lp^\infty(\sB)}\\
&\le\; \kappa_1 + \norm{\Lyap}_{\Lp^\infty(D_2)} +
\Tilde{C}_{1}(p)\,\norm{\widetilde\cI\,\varphi_\alpha}_{\Lp^d(D_1)}\,.
\end{align*}
This implies that, either
$\norm{\varphi_\alpha}_{\Lp^\infty(D_2)}
\le 2\bigl(\kappa_1 +\norm{\Lyap}_{\Lp^\infty(D_2)}\bigr)$,
in which case \eqref{PT3.3C} holds with this bound, or
\begin{equation}\label{PT3.3E}
\norm{\varphi_\alpha}_{\Lp^\infty(D_2)}\;\le\;
2\Tilde{C}_{1}(p)\,\norm{\widetilde\cI\,\varphi_\alpha}_{\Lp^d(D_1)}\,.
\end{equation}
If \cref{PT3.3E} holds, then we write 
$\widetilde\cI\,\varphi_\alpha= \widetilde\cI(\Ind_{D_2}\varphi_\alpha)
+\widetilde\cI(\Ind_{D_2^c}\varphi_\alpha)$,
and use the estimate
\begin{equation*}
\widetilde\cI(\Ind_{D_2^c}\varphi_\alpha)(x)\;\le\;
\norm{\varphi_\alpha}_{\Lp^\infty(\sB)}\,
\biggl(\sup_{x\in D_1}\,\nu_x(D_2^c)\biggr)
+\widetilde\cI(\Ind_{D_2^c}\Lyap)
\qquad\forall\,x\in D_1\,,
\end{equation*}
which holds by \cref{PT3.3B}, together with  \cref{PT3.3D,PT3.3E},
to obtain
\begin{align}\label{PT3.3F}
\norm{\widetilde\cI\,\varphi_\alpha}_{\Lp^\infty(D_1)}&\;\le\;
2\Tilde{C}_{1}(p)\,\norm{\bm\nu}_{\Lp^\infty(D_1)}\,
\norm{\widetilde\cI\,\varphi_\alpha}_{\Lp^d(D_1)}\\
&\mspace{100mu}
+\kappa_1\norm{\bm\nu}_{\Lp^\infty(D_1)}
+\norm{\widetilde\cI(\Ind_{D_2^c}\Lyap)}_{\Lp^\infty(D_1)}\,.\nonumber
\end{align}
We distinguish two cases from \cref{PT3.3F}:

\noindent
\textbf{Case 1.} Suppose that
\begin{equation}\label{PT3.3G}
\norm{\widetilde\cI\,\varphi_\alpha}_{\Lp^\infty(D_1)}\;\le\;
4\Tilde{C}_{1}(p)\,\norm{\bm\nu}_{\Lp^\infty(D_1)}\,
\norm{\widetilde\cI\,\varphi_\alpha}_{\Lp^d(D_1)}\,.
\end{equation}
Let $\psi_\alpha$ be the solution of the Dirichlet problem
\begin{equation*}
\widetilde\Lg_{v_\alpha} \psi_\alpha - \alpha \psi_\alpha \,=\,
-\widetilde\cI\, \varphi_\alpha
\quad \text{in\ } D_1\,,\qquad
\text{and} \quad\psi_\alpha = \varphi_\alpha\quad\text{on\ }\partial D_1\,.
\end{equation*}
Then $\psi_\alpha$ is nonnegative in $D_1$
by the strong maximum principle, and thus \cref{PT3.3G} together
with \cite[Corollary~2.2]{AA-Harnack},
implies that for some constant $C_{\mathsf H}$ we have
\begin{equation}\label{PT3.3H}
\psi_\alpha(x) \;\le\; C_{\mathsf H}\,\psi_\alpha(\Hat{x}_\alpha)
\qquad \forall\,x\in \sB\,,\quad\forall\,\alpha\in(0,1)\,.
\end{equation}
On the other hand, $\varphi_\alpha-\psi_\alpha$ satisfies
\begin{equation}\label{PT3.3I}
\widetilde\Lg_{v_\alpha} (\varphi_\alpha-\psi_\alpha)
-\alpha (\varphi_\alpha-\psi_\alpha)\;=\;
\alpha V_\alpha(\Hat{x}_\alpha) - \rc_{v_\alpha}
\quad \text{in\ } D_1\,,
\end{equation}
and $\varphi_\alpha-\psi_\alpha = 0$ on $\partial D_1$.
Thus, by the ABP weak maximum
principle \cite[Theorem~9.1]{GilTru},
and since $\alpha V_\alpha(\Hat{x}_\alpha)\le\varrho_*$,
we obtain from \cref{PT3.3I} that
\begin{equation}\label{PT3.3J}
\norm{\varphi_\alpha-\psi_\alpha}_{\Lp^\infty(D_1)}
\;\le\; C_\circ\qquad\forall\,\alpha\in(0,1)\,,
\end{equation}
for some constant $C_\circ$.
\Cref{PT3.3J} implies that $\psi_\alpha(\Hat{x}_\alpha)\le C_\circ$.
Combining \cref{PT3.3H,PT3.3J} in the standard manner, we obtain
\begin{align}\label{PT3.3K}
\varphi_\alpha(x) &\;\le\; \norm{\varphi_\alpha-\psi_\alpha}_{\Lp^\infty(D_1)}
+\psi_\alpha(x)\\
&\;\le\; C_\circ + C_{\mathsf H}\,\psi_\alpha(\Hat{x}_\alpha)
\;\le\; C_\circ(1+C_{\mathsf H})\qquad\forall\,x\in\sB\,,\quad\forall\,\alpha\in(0,1)\,.
\nonumber
\end{align}

\noindent
\textbf{Case 2.} Suppose that
\begin{equation*}
\norm{\widetilde\cI\,\varphi_\alpha}_{\Lp^\infty(D_1)}\;\le\;
2\kappa_1\norm{\bm\nu}_{\Lp^\infty(D_1)}\,
+2\,\norm{\widetilde\cI(\Ind_{D_2^c}\Lyap)}_{\Lp^\infty(D_1)}\,.
\end{equation*}
In this case, we consider the solution $\Tilde\psi_\alpha$
of the Dirichlet problem
\begin{equation*}
\widetilde\Lg_{v_\alpha} \Tilde\psi_\alpha -\alpha \Tilde\psi_\alpha \,=\, 0
\quad \text{in\ } D_1\,,\qquad
\text{and} \quad\Tilde\psi_\alpha = \varphi_\alpha\quad\text{on\ } \partial D_1\,.
\end{equation*}
We have
$\Tilde\psi_\alpha(x) \le \Tilde C_{\mathsf H}\,\Tilde\psi_\alpha(\Hat{x}_\alpha)$
for all $x\in \sB$ and $\alpha\in(0,1)$,
for some constant $\Tilde C_{\mathsf H}$.
Also,
\begin{equation}\label{PT3.3M}
\widetilde\Lg_{v_\alpha} (\varphi_\alpha-\Tilde\psi_\alpha)
-\alpha (\varphi_\alpha-\psi_\alpha)
\,=\, -\widetilde\cI\, \varphi_\alpha + \alpha V_\alpha(\Hat{x}_\alpha) - \rc_{v_\alpha}
\quad \text{in\ } D_1\,,
\end{equation}
and $\varphi_\alpha-\Tilde\psi_\alpha = 0$ on $\partial D_1$.
By the ABP weak maximum principle, we obtain from
\cref{PT3.3M} that
$\norm{\varphi_\alpha-\Tilde\psi_\alpha}_{\Lp^\infty(D_1)}\le\Tilde C_\circ$
for all $\alpha\in(0,1)$ and
for some constant $\Tilde C_\circ$.
Thus again we obtain \cref{PT3.3K} with constants $\Tilde C_\circ$
and $\Tilde C_{\mathsf H}$.
This establishes \cref{PT3.3C}.

It follows by \cref{PT3.3C} that
$\overline{V}_\alpha\df V_\alpha - V_\alpha(0)=\varphi_\alpha(x)-
\varphi_\alpha(0)$ is locally bounded,
uniformly in $\alpha\in(0,1)$.
The same applies to $\widetilde\cI\, \overline{V}_\alpha$ by \cref{PT3.3B} and \ref{H2}.
Note that
\begin{equation*}
\widetilde\Lg_{v_\alpha} \overline{V}_\alpha 
-\alpha \overline{V}_\alpha\,=\, \alpha V_\alpha(0) - \rc_{v_\alpha}
-\widetilde\cI\, \overline{V}_\alpha
\quad \text{on\ } \Rd\,.
\end{equation*}
Thus, by the interior estimate in \cite[Theorem~9.11]{GilTru}, there exists
a constant $C= C(R,p)$ such that
\begin{equation*}
\bnorm{\overline{V}_\alpha}_{\Sob^{2,p}(B_{R})}\;\le\;
C\Bigl(\bnorm{\overline{V}_\alpha}_{\Lp^p(B_{2R})}+
\bnorm{\alpha V_\alpha(0) - \rc_{v_\alpha}
-\widetilde\cI\, \overline{V}_\alpha}_{\Lp^p(B_{2R})}\Bigr)\,.
\end{equation*}
Hence $\{\overline{V}_\alpha\}$ is bounded in $\Sob^{2,p}(B_{R})$ for any
$R>0$.
A standard argument then shows that
given any sequence
$\alpha_n\searrow0$, $\{\overline{V}_{\alpha_n}\}$
contains a subsequence which converges in $\Cc^{1,r}(\overline{B}_R)$
for any $r<1-\frac{d}{p}$ (see, e.g., Lemma~3.5.4 in \cite{ABG12}).
Taking limits in
\begin{equation}\label{ET3.3N}
\min_{z\in\Act}\;\bigl[\Ag_z\overline{V}_\alpha(x) -\alpha\overline{V}_\alpha(x)
+ \rc(x,z)\bigr] \;=\; \alpha V_\alpha(0)
\end{equation}
along this subsequence we obtain \cref{ET3.3A}, as claimed in the statement
of the theorem, for some $\varrho\in\RR$.
Since $\limsup_{\alpha\searrow0}\;\alpha V_\alpha(\Hat{x}_\alpha)\le\varrho_*$,
we have $\varrho\le\varrho_*$.
On the other hand,
from the theory of infinite dimensional linear programming
\cite{AndNash} it is well known that the value of the dual problem cannot be
smaller than the value of the primal, hence $\varrho\ge\varrho_*$, and we have
equality (see also Section~4 in \cite{BhBo}).

Suppose now that $\widetilde{V}\in\Sobl^{2,d}(\Rd)$
is bounded from below in $\Rd$, and satisfies
\begin{equation}\label{ET3.3O}
\min_{z\in\Act}\;\bigl[\Ag_{z}\,\widetilde{V}(x) + \rc(x,z)\bigr]
\;=\; \varrho_*\,.
\end{equation}
Let $\Tilde{v}\in\Usm$ be an a.e.\ measurable selector from the minimizer
of \cref{ET3.3O}.
Define $\widetilde{V}^\epsilon\df (1+\epsilon)\widetilde{V}$, $\epsilon>0$.
Arguing as in the derivation of  \cref{PT3.3B}, it is clear that
this equation holds with $\Lyap$ replaced by $\widetilde{V}^\epsilon$.
Translate $\widetilde{V}^\epsilon$ by an additive constant until it touches
$\varphi_\alpha$ at some point from above.
Since
\begin{equation*}
\Ag_{\Tilde{v}}(\widetilde{V}^\epsilon-\varphi_\alpha)
- \alpha (\widetilde{V}^\epsilon-\varphi_\alpha)
\;\le\; (1+\epsilon)\varrho_* - \alpha\varphi_\alpha(\Hat{x}_\alpha)-\rc_{\Tilde{v}}\,,
\end{equation*}
taking first limits as $\alpha\searrow0$,
and then as $\epsilon\searrow0$, we obtain
$\Ag_{\Tilde{v}}(\widetilde{V}-V)\le0$,
and conclude that $\widetilde{V}=V$ by the strong maximum principle.

It is evident from the uniqueness of the solution, that
the limit of \cref{ET3.3N} is independent of the subsequence $\alpha_n\searrow0$
chosen.
It is also clear that $\alpha V_\alpha(x)\to\varrho_*$ as $\alpha\searrow0$,
uniformly on compact sets.
This completes the proof.
\end{proof}

\begin{remark}\label{R3.4}
If $\nu_x$ is translation invariant and has compact support,
and $\rc$ and $b$ are locally H\"older continuous in $x$,
then $\widetilde\cI V$ is locally H\"older continuous,
and thus the solution $V$ in \cref{T3.3} is in
$\Cc^{2,r}(\Rd)$ for some $r\in(0,1)$
by elliptic regularity
\cite[Theorem~9.19]{GilTru}.
\end{remark}

\subsubsection{Verification of optimality}
We start with the following theorem.

\begin{theorem}\label{T3.5}
Assume the hypotheses of \cref{T3.3}.
If $v\in\Ussm$ is optimal, then it satisfies
\begin{equation}\label{ET3.5A}
b^i_{v}(x)\,\partial_i V(x)+\rc_{v}(x)
\;=\;  \inf_{z\in\Act}\;\bigl[b^i(x,z)
\partial_i V(x)+\rc(x,z)\bigr]\qquad\text{a.e.\ } x\in\Rd\,.
\end{equation}
In addition, provided $V$ is inf-compact,
any stable $v\in\Ussm$ which satisfies
\cref{ET3.5A} is necessarily optimal.
\end{theorem}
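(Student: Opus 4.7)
The plan is to prove both directions by reducing to the integrated identity $\mu_v(\Ag_v V) = 0$ for the relevant stable $v$, combined with the pointwise HJB inequality $\Ag_v V + \rc_v \ge \varrho_*$ valid a.e.\ for any $v \in \Usm$ by \cref{ET3.3A}. Since the local second-order and the nonlocal terms in $\Ag_z V + \rc(x,z)$ are $z$-independent, \cref{ET3.5A} is equivalent to the pointwise equality $\Ag_v V + \rc_v = \varrho_*$ a.e.\ Lebesgue, and by the strict positivity of the density of $\mu_v$ (\cref{T2.5}), $\mu_v$-a.e.\ equality upgrades to Lebesgue-a.e.

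\emph{Necessity.} Let $v \in \Ussm$ be optimal, so $\mu_v(\rc_v) = \varrho_*$. Integrating the nonnegative expression $\Ag_v V + \rc_v - \varrho_* \ge 0$ against $\mu_v$ yields $\mu_v(\Ag_v V) \ge 0$. To establish the reverse inequality, I would apply \cref{E-infmeas} to spatial truncations $V_n \df \phi_n(V - V(0))$, where $\phi_n \in \Cc_c^\infty(\Rd)$ satisfies $\phi_n = 1$ on $B_n$, $0 \le \phi_n \le 1$, with $\abs{\grad \phi_n} + \abs{\partial^2 \phi_n}$ uniformly bounded. Since $V_n \in \Sob^{2,p}(\Rd) \cap \Cc_c(\Rd)$ for $p > d$, \cref{L3.1} gives $\mu_v(\Ag_v V_n) = 0$. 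Expanding via the product rule for the local part and the identity $\cI(\phi_n u)(x) - \phi_n(x)\cI u(x) - u(x)\cI \phi_n(x) = \int[\phi_n(x+y) - \phi_n(x)][u(x+y) - u(x)]\,\nu_x(\D y)$ for the nonlocal part (applied to $u = V - V(0)$), the integrated cross terms vanish as $n \to \infty$, using \cref{A1.1}(b)--(c) together with the Lyapunov bound $V \le C + \Lyap$ inherited from \cref{PT3.3A} and \cref{E-H2}. This yields $\mu_v(\Ag_v V) = 0$, hence $\mu_v(\Ag_v V + \rc_v - \varrho_*) = 0$ with nonnegative integrand, so \cref{ET3.5A} follows a.e.\ Lebesgue.

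\emph{Sufficiency.} Assume $V$ is inf-compact and $v \in \Ussm$ satisfies \cref{ET3.5A}, so $\Ag_v V + \rc_v = \varrho_*$ a.e. Let $\chi_R$ be the concave truncation from the proof of \cref{L3.1} and set $m \df \inf_\Rd V$. Inf-compactness ensures that $\chi_R(V - m) - (R+1)$ has compact support and lies in $\Sobl^{2,p}$, so by \cref{L3.1} we have $\mu_v(\Ag_v \chi_R(V - m)) = 0$. The identity \cref{PL3.1A} combined with (i) $\chi_R'' \le 0$, (ii) $a$ positive definite, and (iii) $\cI\chi_R(V - m) \le \chi_R'(V - m)\cI V$ by Jensen's inequality for the concave $\chi_R$, gives $\Ag_v \chi_R(V - m) \le \chi_R'(V - m)\,\Ag_v V$ pointwise. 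Integrating and substituting $\Ag_v V = \varrho_* - \rc_v$ yields $0 \le \mu_v\bigl(\chi_R'(V - m)(\varrho_* - \rc_v)\bigr)$. As $R \to \infty$, $\chi_R'(V - m) \nearrow 1$ pointwise; dominated convergence (with $\chi_R' \rc_v \le \rc_v \in \Lp^1(\mu_v)$) gives $\mu_v(\rc_v) \le \varrho_*$, and combined with $\mu_v(\rc_v) \ge \varrho_*$ from the definition of $\varrho_*$, we conclude $v$ is optimal.

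The main obstacle will be the necessity direction: justifying $\mu_v(\Ag_v V) \le 0$ via the spatial-truncation argument requires careful control of the nonlocal cross term, since $\nu_x$ is only assumed finite (possibly anisotropic and singular) and $V$ is only known to be bounded from below, not inf-compact. The Lyapunov bound $V \le C + \Lyap$ and \cref{A1.1}(c) (locally bounded `tails' of $\nu_x$) are essential to make these cross terms vanish in the limit.
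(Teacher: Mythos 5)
Your reduction of \cref{ET3.5A} to the pointwise identity $\Ag_v V+\rc_v=\varrho_*$ a.e.\ (using that the second-order and nonlocal parts are $z$-independent, and the strict positivity of the density of $\mu_v$), and your sufficiency argument via $\chi_R(V-m)$, \cref{PL3.1A} and the concavity inequality $\cI\chi_R(\varphi)\le\chi_R'(\varphi)\,\cI\varphi$, are correct and are exactly what the paper does for the second assertion.

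The necessity direction, however, has a genuine gap. You propose to obtain $\mu_v(\Ag_v V)=0$ by testing \cref{E-infmeas} against $V_n=\phi_n\,(V-V(0))$ and claiming the cutoff cross terms vanish, but nothing in the standing hypotheses controls them. The local cross terms are $\mu_v$-integrals over the annulus $\supp\grad\phi_n$ of quantities like $a^{ij}\partial_i\phi_n\,\partial_j V$ and $(V-V(0))\,\Lg_v\phi_n$; to make these vanish you would need $\mu_v(\Lyap)<\infty$ and $\mu_v(\abs{\grad V})<\infty$, or global growth bounds on $a$ and $\Hat{b}$ so as to exploit the $1/n$ decay of the cutoff derivatives. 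None of this is available: $\Lyap$ is a Lyapunov function only for the fixed control $\Hat{v}$ of \ref{H2}, not for the given optimal $v$, so $\mu_v(\Lyap)<\infty$ cannot be inferred, and the coefficients are merely locally bounded. The nonlocal cross term is worse: $\nu_x$ need not have compact support here, so $\int\bigl[\phi_n(x+y)-\phi_n(x)\bigr]\bigl[u(x+y)-u(x)\bigr]\nu_x(\D y)$ is generally nonzero for every $x$, and bounding its $\mu_v$-integral requires something like $\mu_v\bigl(\widetilde\cI\Lyap\bigr)<\infty$, again unknown; \cref{A1.1}(b)--(c) control $\bm\nu$ and the mass of jumps landing in compact sets, not $\nu_x$-moments of $\Lyap$ weighted by $\mu_v$. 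This is precisely the difficulty the paper's proof is built to avoid: it argues by contradiction through the discounted approximations, replacing $V$ by the Dirichlet solutions $\psi_{\alpha,R}$ of \cref{PT3.2A} (which vanish outside $B_R$), composing with the bounded concave function $\Breve\chi$ so that \cref{L3.1} applies to a compactly supported function, and extracting only the one-sided inequality $\mu_v(\rc_v)\ge\mu_v(\alpha V_\alpha)+\mu_v(h_\alpha)$, which upon letting $R\to\infty$ and $\alpha\searrow0$ forces the defect $h$ to vanish. To repair your argument you should route the truncation through $\psi_{\alpha,R}$ and a bounded concave composition rather than through a spatial cutoff of $V$ itself.
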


\begin{proof}
Suppose not. Then there exists some ball $\sB$ such that
\begin{equation}\label{PT3.5A}
h(x) \;\df\; \Bigl(b^i_{v}(x)\,\partial_i V(x)+\rc_{v}(x)
-\inf_{z\in\Act}\;\bigl[b^i(x,z)
\partial_i V(x)+\rc(x,z)\bigr]\Bigr)\,\Ind_{\sB}(x)
\end{equation}
is a nontrivial nonnegative function.
Since $\partial_i V_\alpha$ converges uniformly to $\partial_i V$
as $\alpha\searrow0$ on compact sets by \cref{T3.3}, it follows
that if we define $h_\alpha$ as the right hand side of \cref{PT3.5A}, but
with $V$ replaced by $V_\alpha$, then
$h-h_\alpha$ converges to $0$ a.e.\ in $\sB$, and also
$\mu_v(\abs{h-h_\alpha})\to0$ as $\alpha\searrow0$, since $\mu_v$
has a density in $\Lpl^p(\Rd)$ for some $p>1$.
We have $\Ag_v V_\alpha\ge \alpha V_\alpha + h_\alpha - \rc_v$ a.e.\ on $\Rd$
by the definition of $h_\alpha$.
With $\psi_{\alpha,R}$ the solution in \cref{PT3.2A}, and
$m_\alpha=\inf_\Rd\, V_\alpha$, and define
$\Breve\psi_{\alpha,R}\df \psi_{\alpha,R}- m_\alpha$.
Repeating the above argument, there exists $\Breve{h}_{\alpha,R}$ supported
on $\sB$ such that $\mu_v(\abs{\Breve{h}_{\alpha,R}-h_\alpha})\to0$ as $R\to\infty$,
and
$\Ag_v\Breve\psi_{\alpha,R}
\ge \alpha\psi_{\alpha,R} + \Breve{h}_{\alpha,R}- \rc_v$.
We apply the function $\Breve\chi(x) \df - \chi(\frac{\varrho_*}{2}+1-x)$,
with $\chi$ as defined in the proof of \cref{L3.1}, and repeat the
argument in \cref{T3.2}, also letting $R\to\infty$, to obtain
$\mu_v(\rc_v) \;\ge\; \mu_v(\alpha V_\alpha) + \mu_v(h_\alpha)$.
By the proof of \cref{T3.3} $\inf_\Rd\,\alpha V_\alpha\to\varrho_*$
as $\alpha\searrow0$.
Thus, taking limits as $\alpha\searrow0$, we obtain $\mu_v(h)\le0$, and since
$\mu_v$ has everywhere positive density, this implies $h=0$ a.e.

The second assertion of the theorem is easily
established by the argument in the proof of \cref{L3.1}, using
the function $\chi^{}_R$.
\end{proof}

\begin{remark}
If we impose the additional assumption that the coefficients
$a$ and $b$ have at most affine growth, and that $\nu_x(B_R-x)$ vanishes
as $\abs{x}\to\infty$, for any ball $B_R$, then it is standard to show that
the solution $V$ in \cref{T3.3} is inf-compact, so that the second
assertion of \cref{T3.5} applies.
However, this leaves open the question whether a $v\in\Usm$
that satisfies \cref{ET3.5A} is necessarily stable.
We provide a partial answer to this in \cref{T3.8} below.
\end{remark}

Recall \cref{D2.7}.
We impose additional assumptions on $\nu_x$ to
establish existence of solutions to the Poisson equation.

\begin{theorem}\label{T3.7}
We assume \textup{\ref{H1}} and one of the following:
\begin{itemize}
\item[\textup{(}a\/\textup{)}]
$\nu=\nu_x$ is translation invariant and has compact support.
\item[\textup{(}b\/\textup{)}]
$\nu_x$ has locally compact support and satisfies the hypotheses
of \cref{C2.8}.
\end{itemize}
Let $\Hat{v}\in\Ussm$ be such that
$\rc_{\Hat{v}}$ is coercive relative to $\varrho_{\Hat{v}}$.
Then, up to an additive constant, there exists a unique
$\widehat{V}\in\Sobl^{2,d}(\Rd)$
which is bounded
from below in $\Rd$, and satisfies
\begin{equation}\label{ET3.7A}
\Ag_{\Hat{v}}\,\widehat{V}(x) + \rc_{\Hat{v}}(x) \;=\; \beta
\qquad\forall\,x\in\Rd\,,
\end{equation}
for some $\beta= \varrho_{\Hat{v}}$.
For $\beta<\varrho_{\Hat{v}}$, there is no such solution.
\end{theorem}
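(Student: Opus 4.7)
The approach is a vanishing-discount argument parallel to \cref{T3.3}, specialized to the fixed stationary control $\hat v$ so that the HJB minimization drops out. I carry it out in three stages.

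For each $\alpha\in(0,1)$, I first build a nonnegative $V_\alpha\in\Sobl^{2,p}(\Rd)$ solving $\Ag_{\hat v}V_\alpha-\alpha V_\alpha+\rc_{\hat v}=0$ as the monotone limit of solutions $\psi_{\alpha,R}\ge0$ of the Dirichlet problems on $B_R$ with zero boundary data. Integrating against $\mu_{\hat v}$ gives $\alpha\mu_{\hat v}(V_\alpha)=\varrho_{\hat v}$ and in particular $\inf_\Rd\alpha V_\alpha\le\varrho_{\hat v}$; repeating the second-half Dirichlet argument of \cref{T3.2} — with boundary value $\alpha^{-1}(\varrho_{\hat v}+\epsilon_\circ)$ and invoking \ref{H1} together with the strong maximum principle — localizes the minimizer $x_\alpha$ in the bounded coercivity set $\varGamma\df\{\rc_{\hat v}\le\varrho_{\hat v}\}$.

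Setting $\varphi_\alpha\df V_\alpha-V_\alpha(x_\alpha)\ge0$, I transport verbatim the local-boundedness chain of \cref{T3.3} — decomposing $\Ag_{\hat v}=\widetilde\Lg_{\hat v}+\widetilde\cI$, applying the local maximum principle and supersolution estimate of Gilbarg--Trudinger, splitting into the two cases distinguished by \cref{PT3.3G}, and using \cite[Corollary~2.2]{AA-Harnack} in one case and the ABP weak maximum principle in the other — to secure uniform-in-$\alpha$ local bounds on $\varphi_\alpha$. Elliptic regularity and a subsequential compactness argument then produce $\widehat V\in\Sobl^{2,p}(\Rd)$, bounded below, with $\Ag_{\hat v}\widehat V+\rc_{\hat v}=\beta$ for some $\beta\le\varrho_{\hat v}$. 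The identification $\beta=\varrho_{\hat v}$, together with nonexistence of solutions for $\beta<\varrho_{\hat v}$, follows by integrating the limit equation against $\mu_{\hat v}$ through the concave cutoff $\chi_R(\widehat V)$ from the proof of \cref{L3.1}, exploiting that $\Ag_{\hat v}\widehat V\le\beta-\rc_{\hat v}\le0$ outside $\varGamma$. Uniqueness up to an additive constant is then obtained as in the last portion of the proof of \cref{T3.3}: given a second bounded-below solution $\widetilde V$, the function $\widetilde V^\epsilon\df(1+\epsilon)\widetilde V$ translated so as to touch $\varphi_\alpha$ from above, followed by the double limit $\alpha\searrow0$ and then $\epsilon\searrow0$, yields $\Ag_{\hat v}(\widetilde V-\widehat V)\le0$; \ref{H1} and the strong maximum principle then force $\widetilde V-\widehat V$ to be constant.

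The main obstacle is precisely where hypotheses~(a) or~(b) enter. The uniform local bound in the second stage cannot rest on the Lyapunov estimate~\cref{PT3.3B} of \cref{T3.3}, since \ref{H2} is not assumed here; the locally compact support of $\nu_x$ in~(a) or~(b) is what permits $\widetilde\cI\,\varphi_\alpha$ to be controlled by the local $\Lp^p$ norms in the estimate chain leading to~\cref{PT3.3D}. Simultaneously, because $\widehat V$ is only bounded below and not necessarily inf-compact, the direct application of \cref{L3.1} via $\chi_R(\widehat V)-R-1$ is unavailable; the H\"older continuity of the invariant density furnished by \cref{T2.5} under~(a) or \cref{C2.8} under~(b) is what makes the spatially-truncated integration pass to the limit $R\to\infty$ without residue.
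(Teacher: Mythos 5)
Your overall architecture (vanishing discount for the linear equation, uniform local estimates, subsequential limit, uniqueness via a touching argument) matches the paper's, but there is a genuine gap at exactly the point you flag as "the main obstacle," and the fix you gesture at does not work. You correctly observe that the local-boundedness chain of \cref{T3.3} cannot be transported verbatim because it is anchored on the global Lyapunov bound \cref{PT3.3B}; without that bound, the chain is circular: the local maximum principle estimate for $\norm{\varphi_\alpha}_{\Lp^\infty(B_R)}$ requires $\norm{\widetilde\cI\,\varphi_\alpha}_{\Lp^d(D_1)}$, which (even granting locally compact support of $\nu_x$) requires an a priori local $\Lp^d$ bound on $\varphi_\alpha$ over a strictly larger ball, and the supersolution estimate \cite[Theorem~9.22]{GilTru} only returns $\norm{\varphi_\alpha}_{p;D_1}$ for some possibly very small $p>0$. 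Saying that "locally compact support permits $\widetilde\cI\varphi_\alpha$ to be controlled by local $\Lp^p$ norms" does not break this circle, because you never establish the uniform local $\Lp^p$ bound on $\varphi_\alpha$ itself that this control presupposes. The paper resolves this with two devices you do not have: (i) it truncates the running cost, $\rc^n=n\wedge\rc$, so that the discounted solutions satisfy $\norm{\widehat{V}^n_\alpha}_{\Lp^\infty(\Rd)}\le \nicefrac{n}{\alpha}$ — a global bound that substitutes for $\Lyap$ in making $\widetilde\cI$ well defined and in running the \cref{T3.2}-type comparison; and (ii) it writes \cref{PT3.7A} in divergence form, discards the term $-\widetilde\cI\varphi_\alpha^n\le0$ by sign so that $\varphi_\alpha^n$ is a supersolution of the purely local operator, applies the weak Harnack estimate \cite[Theorem~8.18]{GilTru} together with the normalization $\varphi_\alpha^n(\Hat{x}_\alpha^n)=0$, and then chains from ball to ball to get uniform local $\Lp^p$ bounds for $p\in\bigl(1,\frac{d}{d-2}\bigr)$ everywhere. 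Only after that does the compact support of $\nu_x$ (or \cref{C2.8}) enter, in a Calder\'on--Zygmund bootstrap that upgrades these to uniform local $\Sob^{2,p}$ bounds for all $p>d$. None of the Case 1/Case 2 dichotomy around \cref{PT3.3G}, \cite[Corollary~2.2]{AA-Harnack}, or the ABP step is used or needed here.

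Two secondary points. First, your claimed identity $\alpha\mu_{\Hat{v}}(V_\alpha)=\varrho_{\Hat{v}}$ is not justified: $V_\alpha$ is neither compactly supported nor known to be inf-compact, so \cref{L3.1} does not apply directly; only the inequality $\inf_\Rd \alpha V_\alpha\le\varrho_{\Hat{v}}$ is available (via the concave cutoff applied to the compactly supported Dirichlet solutions, as in \cref{T3.2}), and that is all that is needed. Second, your closing claim that the H\"older continuity of the invariant density from \cref{T2.5} or \cref{C2.8} "makes the spatially-truncated integration pass to the limit without residue" is not an argument; since $\widehat{V}$ is only bounded below, $\chi_R(\widehat{V})-R-1$ need not be compactly supported, and you have not supplied a substitute for that step.
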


\begin{proof}
For $n\in\NN$, let $\rc^n= n\wedge\rc$ denote the $n$-truncation of the running
cost.
It is clear that $\rc^n$ is coercive relative to $\varrho_{\Hat{v}}$
for all $n>\varrho_{\Hat{v}}$.
Consider the $\alpha$-discounted problem in \cref{T3.2}.
The Dirichlet problem in \cref{PT3.2A}
is now a linear problem, and we let $\Hat{\psi}^n_{\alpha,R}$ denote the
corresponding solution.
It is clear that
$\norm{\Hat{\psi}^n_{\alpha,R}}_{\Lp^\infty(\Rd)}\le\frac{n}{\alpha}$,
and this is inherited by the function
$\widehat{V}^n_\alpha$ at the limit $R\to\infty$.
Thus, by the proof of \cref{T3.2}, $\widehat{V}^n_\alpha$ is in $\Sobl^{2,p}(\Rd)$
for any $p\ge1$, and satisfies
$\Ag_{\Hat{v}}\,\widehat{V}^n_\alpha + \rc_{\Hat{v}}^n = \alpha \widehat{V}^n_\alpha$.
Repeating the argument in the proof of \cref{T3.3},
the infimum of $\widehat{V}^n_\alpha$ over $\Rd$ is attained in a ball
$\sB_\circ$ as defined in \cref{S2.2} (relative to $\varrho_{\Hat{v}}$),
and if $\Hat{x}_\alpha^n\in\sB_\circ$ denotes a point where the infimum is attained,
then
$\alpha \widehat{V}^n_\alpha(\Hat{x}_\alpha^n)\le \varrho_{\Hat{v}}$.
With $\varphi_\alpha^n \df \widehat{V}^n_\alpha
- \widehat{V}^n_\alpha(\Hat{x}_\alpha^n)$, we write
the equation as
\begin{align}\label{PT3.7A}
\widetilde\Lg_{\Hat{v}} \varphi_\alpha^n(x) - \alpha\varphi_\alpha^n(x)
&\;=\; \alpha \widehat{V}^n_\alpha(\Hat{x}_\alpha^n) - \rc^n_{\Hat{v}}(x)
-\widetilde\cI\varphi_\alpha^n(x)\\
&\;\le\; \varrho_{\Hat{v}} -\rc^n(x)-\widetilde\cI\varphi_\alpha^n(x)
\qquad\text{a.e.\ }x\in \Rd\,.\nonumber
\end{align}
We express \cref{PT3.7A} in divergence form as
\begin{equation*}
\partial_j \bigl(a^{ij}\partial_i \varphi_\alpha^n\bigr)
+ \bigl(\Hat{b}^i-\partial_i a^{ij}\bigr)\partial_i \varphi_\alpha^n
-\bm\nu \varphi_\alpha^n
 \;\le\; \varrho_{\Hat{v}} - \rc_{\Hat{v}} -\widetilde\cI \varphi_\alpha^n\,,
\end{equation*}
and apply
\cite[Theorem~8.18]{GilTru} to obtain 
$\bnorm{\varphi_\alpha^n}_{\Lp^p(B_{2R}(x_0))}
\le \varrho_{\Hat{v}}\,\kappa_{p,R}$
for some constant $\kappa_{p,R}$, for any $p\in\bigl(1,\frac{d}{d-2}\bigr)$.
Therefore,
$\inf_{B_{2R}(x_0)\setminus B_{R}(x_0)}\, \varphi_\alpha^n$ is bounded over
$\alpha\in(0,1)$ and $n\ge\varrho_{\Hat{v}}$.
Thus, we can select some $x'_0\in B_{2R}(x_0)\setminus B_{R}(x_0)$
satisfying $\sup_n \varphi_\alpha^n(x_0')<\infty$, and repeat the procedure to
show by induction 
that $\varphi_\alpha^n$ is locally bounded in $\Lp^p$
for any $p\in\bigl(1,\frac{d}{d-2}\bigr)$, uniformly
over $\alpha\in(0,1)$ and $n\ge\varrho_{\Hat{v}}$.

Next, we apply successively the Calder\'on--Zygmund estimate
\cite[Theorem~9.11]{GilTru}
to the non-divergence form of the
equation in \cref{PT3.7A} which states that
\begin{equation*}
\bnorm{\varphi_\alpha^n}_{\Sob^{2,p}(B_{R})}\;\le\;
C\Bigl(\bnorm{\varphi_\alpha^n}_{\Lp^p(B_{2R})}+
\bnorm{\alpha V_\alpha(\Hat{x}_\alpha^n) - \rc^n_{\Hat{v}}
-\widetilde\cI\, \varphi_\alpha^n}_{\Lp^p(B_{2R})}\Bigr)\,.
\end{equation*}
We start with the $\Lp^p$ estimate, say with $p=\frac{d}{d-r}$ for $r\in(1,2)$.
If (a) holds, then
$\norm{\widetilde I \varphi_\alpha^n}_{\Lp^p(B_{R}(x))}\le
\bm\nu\, \norm{\varphi_\alpha^n}_{\Lp^p(B_{R+R_\circ}(x))}$
by the Minkowski integral inequality,
where $R_\circ$ is such that the support of $\nu$ is contained in $B_{R_\circ}$,
while in case (b) we use the technique in the proof of \cref{C2.8}.
Using the compactness of the
embedding $\Sob^{2,p}(B_{R})\hookrightarrow\Lp^q(B_{R})$
for $p\le q <\frac{pd}{d-2p}$, we choose $q=\frac{pd}{d-rp}$
to improve the estimate to a new $p=\frac{d}{d-2r}$.
Continuing in this manner,  in at most $d-1$ steps
we obtain
\begin{equation*}
\sup_{n\ge\varrho_{\Hat{v}}}\,\sup_{\alpha\in(0,1)}\;
\norm{\varphi_\alpha^n}_{\Sob^{2,p}(B_R)}\;<\;\infty
\end{equation*}
for any $p>d$ and $R>0$.
Letting first $n\to\infty$, and then $\alpha\searrow0$, along an appropriate
subsequence, we obtain a solution to \cref{ET3.7A} as claimed.
The rest follow as in the proof of \cref{T3.3}.
\end{proof}

\begin{theorem}\label{T3.8}
Grant the hypotheses of \cref{T3.7}.
Then the conclusions of \cref{T3.3} hold.
Moreover, provided $V$ is inf-compact, a control
$v\in\Usm$ is optimal if and only if it satisfies \cref{ET3.5A}.
\end{theorem}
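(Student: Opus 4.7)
The plan is to split the proof into two parts: first, recovering the conclusions of \cref{T3.3} after relaxing \ref{H2}; and second, completing the optimality characterization by an analytical argument showing that any $v$ satisfying \cref{ET3.5A} is stable.

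For the first part, I would replace \ref{H2} by a Poisson-type estimate extracted from \cref{T3.7}. Since $\varrho_*<\infty$ and the primal problem attains its minimum (as shown in \cref{S2.2}), there exists some $\Hat{v}\in\Ussm$ with $\varrho_{\Hat{v}}=\varrho_*$; near-monotonicity of $\rc$ transfers to $\rc_{\Hat{v}}$ relative to $\varrho_{\Hat{v}}$, so \cref{T3.7} delivers a bounded-below $\widehat{V}\in\Sobl^{2,d}(\Rd)$ solving $\Ag_{\Hat{v}}\widehat{V}+\rc_{\Hat{v}}=\varrho_*$. Subtracting $\inf_\Rd\widehat{V}$ yields $\widehat{V}\ge 0$ and
\begin{equation*}
\Ag_{\Hat{v}}\widehat{V}\;\le\;\kappa_0\,\Ind_{\widehat\sB}-\rc_{\Hat{v}}\,,
\end{equation*}
where $\widehat\sB$ contains the bounded sublevel set $\{\rc_{\Hat{v}}\le\varrho_*\}$. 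This is exactly \ref{H2} save that $\widehat{V}$ lies in $\Sobl^{2,d}(\Rd)$ rather than $\Cc^2$. The proof of \cref{T3.3} uses $\Lyap$ only via the comparison $V_\alpha\le\sup_{\sB}V_\alpha+\Lyap$ (an ABP-type estimate) and via $\widetilde{\cI}\Lyap\in L^{\infty}_{\mathrm{loc}}$; both are valid under Sobolev regularity, the latter because under case (a) or (b) of \cref{T3.7} the nonlocal operator maps $L^\infty_{\mathrm{loc}}$ into $L^\infty_{\mathrm{loc}}$. The remainder of that proof transfers verbatim.

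For the second part, \cref{T3.5} gives `optimal $\Rightarrow$ \cref{ET3.5A}'. Conversely, suppose $v\in\Usm$ satisfies \cref{ET3.5A} and $V$ is inf-compact. Substituting into \cref{ET3.3A} yields the Poisson identity $\Ag_v V+\rc_v=\varrho_*$, and near-monotonicity forces $\Ag_v V\le-\epsilon_\circ$ outside some ball, making $V$ an inf-compact Lyapunov function for $v$. By the second assertion of \cref{T3.5}, it remains to show $v\in\Ussm$. I would adapt the construction of \cref{T3.7} to the control $v$ itself, without pre-assuming stability: the one place stability was used in that proof, the bound $\alpha\widehat{V}^n_\alpha(\Hat{x}^n_\alpha)\le\varrho_{\Hat{v}}$, is to be replaced by a direct supersolution comparison with $V$. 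Setting $V_0\df V-\inf_{\Rd}V\ge 0$, a short calculation gives
\begin{equation*}
\Ag_v\bigl(V_0+\alpha^{-1}\varrho_*\bigr)-\alpha\bigl(V_0+\alpha^{-1}\varrho_*\bigr)\;=\;-\rc_v-\alpha V_0\;\le\;-\rc_v\,,
\end{equation*}
so by ABP the Dirichlet-truncated solutions $W_{\alpha,R}$ of $\Ag_v u-\alpha u=-\rc_v$ on $B_R$ with zero boundary data satisfy $W_{\alpha,R}\le V_0+\alpha^{-1}\varrho_*$. Sending $R\to\infty$ and then $\alpha\searrow 0$, the iterative $L^p$ scheme of \cref{T3.7} produces a Poisson solution $\widetilde V\in\Sobl^{2,d}(\Rd)$ for $v$ with constant $\beta\le\varrho_*$, and the uniqueness statement of \cref{T3.3} identifies $\widetilde V$ with $V$ up to an additive constant and forces $\beta=\varrho_*$.

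To exhibit the infinitesimal invariant measure $\mu_v$, I would then pass to the adjoint of the $\alpha$-discounted operator: the inf-compactness of $V$ supplies a Foster--Lyapunov estimate that tightens the adjoint sub-probability measures, and any weak-$*$ limit point $\mu_v$ satisfies \cref{E-infmeas} for $f\in\sC$. Integrating $\Ag_v V+\rc_v=\varrho_*$ against $\mu_v$, as justified by \cref{L3.1} (since $V$ is inf-compact), yields $\mu_v(\rc_v)=\varrho_*$. Hence $v\in\Ussm$ with $\varrho_v=\varrho_*$, i.e., $v$ is optimal. The main obstacle is precisely this last step: the analytical construction of $\mu_v$ from a control not a priori known to be stable. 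The tightness derived from $V$ must be coupled to a careful adjoint construction in the nonlocal setting that rules out mass escaping to infinity through long-range jumps, and this is where the local-compact-support hypotheses on $\nu_x$ in \cref{T3.7} play an essential role.
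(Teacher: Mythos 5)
Your first part is essentially the paper's own argument: the only role of $\Lyap\in\Cc^2(\Rd)$ in the proof of \cref{T3.3} is to guarantee $\widetilde\cI\Lyap\in\Lpl^\infty(\Rd)$ and the barrier bound on $V_\alpha$, and under case (a) or (b) of \cref{T3.7} the Poisson solution $\widehat{V}$ of \cref{ET3.7A} (for a near\mbox{-}optimal or optimal $\Hat v\in\Ussm$, for which coerciveness of $\rc_{\Hat v}$ relative to $\varrho_{\Hat v}$ is immediate) serves as a substitute for \ref{H2}. That portion is fine.

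The second part has a genuine gap, and you have in fact flagged it yourself: the whole content of the converse direction is to show that a $v\in\Usm$ satisfying \cref{ET3.5A} is \emph{stable}, i.e., to construct an infinitesimal invariant measure $\mu_v$ analytically, and your proposal stops exactly at that point. The middle step you propose (re-deriving a Poisson solution for $v$ via discounted truncations dominated by $V_0+\alpha^{-1}\varrho_*$ and identifying it with $V$) is redundant --- plugging \cref{ET3.5A} into \cref{ET3.3A} already gives $\Ag_v V+\rc_v=\varrho_*$ a.e. --- and it does not produce $\mu_v$. The subsequent appeal to ``the adjoint of the $\alpha$-discounted operator'' and ``weak-$*$ limit points of sub-probability measures'' is not a proof: since the martingale problem is not assumed well posed, there is no resolvent or occupation measure to fall back on, so one must first establish existence of solutions to the relevant adjoint equations and then rule out that the normalized limits vanish, and neither is addressed. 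The paper closes this gap with a Bogachev--R\"ockner-type construction adapted to the nonlocal term: one solves the coupled problems $\widetilde\Lg^{*}_v\upphi_k+\widehat\cI\upphi_k=0$ on $B_k$ with constant boundary data $c_k$ (existence via an a priori $\Sob^{2,2}$ estimate, compact embedding, and the Leray--Schauder fixed point theorem; nonnegativity by the weak maximum principle), normalizes $\int_{B_k}\upphi_k=1$, and upgrades the regularity to local H\"older equicontinuity by the bootstrap of \cref{T2.5,C2.8}. The no-mass-escape step is then quantitative: integrating the inequality obtained from $\chi_R(V)$ (as in \cref{L3.1}) together with $\Ag_v V+\rc_v=\varrho_*$ against $\upphi_k$ gives $\int_{B_{R(n)}}\rc_v\,\upphi_k\,\D{x}\le\varrho_*$, and near-monotonicity ($\rc_v>\varrho_*+2\epsilon_\circ$ off $\sB_\circ$) yields the uniform lower bound $\int_{\sB_\circ}\upphi_k\ge\frac{2\epsilon_\circ}{\varrho_*+2\epsilon_\circ}$; Arzel\`a--Ascoli and Fatou then give a nontrivial positive limit $\upphi$ solving the stationary adjoint equation, i.e., the density of $\mu_v\in\Inv$. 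Only with that in hand does the second assertion of \cref{T3.5} apply. Without supplying this construction (or an equivalent one), your argument establishes only the ``optimal $\Rightarrow$ \cref{ET3.5A}'' direction and the relaxation of \ref{H2}, not the full equivalence claimed in \cref{T3.8}.
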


\begin{proof}
Note that the only place we use the assumption $\Lyap\in\Cc^2(\Rd)$ in the proof
of \cref{T3.3} is to assert that $\widetilde\cI\Lyap\in\Lpl^\infty(\Rd)$.
Thus, under (a), or (b) of \cref{T3.7}, if we select $\Hat{v}\in\Ussm$
such that $\varrho_{\Hat{v}}\le\varrho_*+\epsilon_\circ$, then
the Poisson equation in \cref{ET3.7A} can be used in lieu
\ref{H2}, and the conclusions of \cref{T3.3} follow.
We next show that any $v\in\Usm$ which satisfies \cref{ET3.5A} is stable.
We adapt the technique which is used in
\cite[Theorem~1.2]{Bogachev-01d} for a local operator,
to construct an infinitesimal invariant measure $\mu_v$.
Let $\widetilde\Lg^{*}_v$ be the operator in \cref{PT2.5C}, and set
$\widehat\cI u(x) \df \int_{\Rd}u(x-y) \nu(\D{y})$ if $\nu_x$ is translation invariant; 
otherwise, under hypothesis
(b) of \cref{T3.7}, we define
$\widehat\cI u(x) \df\int_{\Rd} \psi_{x-y}(y)\,u(x-y)\,\D{y}$.
Consider the solution $\upphi_k$ of the Dirichlet problem
$\widetilde\Lg^{*}_v\upphi_k + \widehat\cI \upphi_k = 0$ on $B_k$,
with $\upphi_k$ equal to a positive constant $c_k$
on $B_k^c$.

Concerning the solvability of the Dirichlet problem,
 note that for $f\in\Lp^2(B_k)$, the problem
$\widetilde\Lg^{*}_v u= -\widehat\cI f$ on $B_k$, with $f=u=c_k$
on $B_k^c$,  has a unique solution $u\in\Sob^{2,2}(B_k)$, which obeys the
estimate $\norm{u}_{\Sob^{2,2}(B_k)}\le\kappa(1+\norm{u}_{\Lp^2(B_k)}+
\norm{\widehat\cI f}_{\Lp^2(B_k)})$ for some constant $\kappa$.
Thus we can combine \cref{C2.8}, the compactness of the
embedding $\Sob^{2,2}(B_{R})\hookrightarrow\Lp^q(B_{R})$ for $q=\frac{2d}{d-1}$,
and the Leray--Schauder fixed point theorem to assert
the existence of a solution $\upphi_k\in\Sob^{2,2}(B_k)$ as claimed
in the preceding paragraph.
The solutions $\upphi_k$ are nonnegative by the weak maximum principle
\cite[Theorem~8.1]{GilTru}.
We choose the constant $c_k$
so that $\int_{B_k} \upphi_k(x)\,\D{x}=1$.

We improve the regularity of $\upphi_k$ by 
following the proofs of \cref{T2.5,C2.8}, and show
that for any $n>0$, there exists $N(n)\in\NN$
such that the sequence $\{\upphi_k\,\colon k>N(n)\}$
is H\"older equicontinuous on the ball $B_n$.
Let $R=R(n)>0$ be such that $V(x)> R+1$ on $B_n^c$.
It is always possible to select such $R(n)$ in a manner
that $R(n)\to\infty$ as $n\to\infty$ by the assumption that $V$ is inf-compact.
Employing the function $\chi^{}_R(V)$ as in the proof of \cref{L3.1}
and using \cref{ET3.3A}, it follows that
$\int_{B_{R(n)}} \rc_v(x)\,\upphi_k(x)\,\D{x} \le \varrho_*$
for all $k>N(n)$ and $n\in\NN$.
This implies that
$\int_{\sB_\circ}\upphi_k(x)\,\D{x} \ge
\frac{2\epsilon_\circ}{\varrho_*+2\epsilon_\circ}$ for all large
enough $k$.
By the Arzel\`a--Ascoli theorem combined with Fatou's lemma, $\upphi_k$ converges
along a subsequence to some positive, locally H\"older continuous
$\upphi\in\Lp^1(\Rd)$ uniformly on compact sets,
which is a generalized solution of \cref{PT2.5B}, and thus
satisfies $\int_{\Rd} f(x) \upphi(x)\,\D{x}=0$ for all $f\in\sC$.
Thus, after normalization, $\upphi$ is the density of an infinitesimal
invariant measure.
Therefore, $v\in\Ussm$, and the rest follows by \cref{T3.5}.
\end{proof}

%

\section{A jump diffusion model}\label{S4}

In this section,
we consider a jump diffusion process $X=\{X_t\colon t\ge 0\}$ in $\RR^d$, $d\ge 2$,
defined by the It{\^o} equation
\begin{equation}\label{E-sde}
\D{X_t} \;=\;
b(X_t,Z_t)\,\D{t} + \upsigma(X_t)\,\D{W_t} + \D{L_t}\,, \quad X_0 = x\in \RR^d\,.
\end{equation}
Here, $W=\{W_t,\, t\ge 0\}$ is a $d$-dimensional standard Wiener process, and
$L = \{L_t,\, t\ge 0\}$ is a L\'evy process such that 
$\D L_t = \int_{\RR^m_*} g(X_{t-},\xi)\, \widetilde\cN(\D t, \D \xi)$, 
where $\widetilde\cN$
is a martingale measure in $\RR^m_*=\RR^m\setminus \{0\}$,
$m\ge 1$, corresponding to a standard Poisson random measure $\cN$.
In other words,
$\widetilde\cN(t,A) = \cN(t,A) - t \varPi(A)$ with
$\Exp[\cN(t,A)] = t \varPi(A)$ for any
$A \in\Bor(\RR^m)$, with $\varPi$ a $\sigma$-finite measure on $\RR^m_*$,
and $g$ a measurable function. 

The processes $W$ and $\cN$ are defined on a complete probability space
$(\Omega,\sF,\Prob)$.
Assume that the initial condition $X_0$, $W_0$, and $\cN(0,\cdot)$
are mutually independent. 
The control process $Z = \{Z_t,\, t\ge 0\}$ takes values in a compact,
metrizable space $\Act$,
is  $\mathfrak{F}_t$-adapted, and \emph{non-anticipative}:
for $s < t$, $\bigl(W_{t} - W_{s},\, \cN(t,\cdot)-\cN(s,\cdot)\bigr)$ is independent of
\begin{equation*}
\sF_{s} \;\df\; \text{the completion of~}
\sigma\{X_{0}, Z_{r}, W_{r},\cN(r,\cdot)\,\colon\, r\le s\}
\text{~relative to~} (\sF,\Prob)\,.
\end{equation*} 
Such a process $Z$ is called an \emph{admissible control} and we denote the set
of admissible controls by $\Uadm$.

\subsection{The ergodic control problem for the jump diffusion}

Let $\rc\colon\RR^d\times\Act \mapsto\RR_+$ denote the running cost function,
which is assumed to satisfy \cref{E-nm}.

For an admissible control process $Z\in\Uadm$, we 
consider the \emph{ergodic cost} defined by
\begin{equation*}
\Tilde\varrho^{\vphantom{\frac{1}{2}}}_{Z}(x)
\;\df\; \limsup_{T\rightarrow\infty}\;\frac{1}{T}\;
\Exp_x^Z\biggl[\int_{0}^{T}\rc(X_t,Z_t)\,\D{t}\biggr]\,.
\end{equation*}
Here $\Exp_x^Z$ denotes the expectation operator corresponding to
the process controlled under $Z$, with initial condition
$X_0 = x \in \RR^d$.
The ergodic control problem seeks to minimize the ergodic cost over
all admissible controls.
We define 
$\Tilde\varrho_*(x)\;\df\;\inf_{Z\in\Uadm}\;
\Tilde\varrho^{\vphantom{\frac{1}{2}}}_{Z}(x)$.
As we show in \cref{T4.3}, this infimum is realized with a stationary
Markov control,
and $\Tilde\varrho_*(x)=\varrho_*$, with $\varrho_*$ as defined in
\cref{S2.1}, so it does not depend on $x$.

\subsection{Assumptions on the parameters and the running cost}\label{S4.2}

We impose the following set of assumptions on the data
which guarantee the existence of a solution to the  It{\^o} equation \cref{E-sde}
(see, e.g., \cite{ABG12,GS72}).
These augment and replace \cref{A1.1},
and are assumed throughout this section by default.
In these hypotheses,
 $C_R$ is a positive constant, depending on $R\in(0,\infty)$.
Also $a\df\frac{1}{2}\upsigma\upsigma'$,
$\RR^m_* \df \RR^m\setminus\{0\}$, and
$\lVert M\rVert\df\bigl(\trace\, MM'\bigr)^{\nicefrac{1}{2}}$
denotes the Hilbert--Schmidt norm of a
$d\times k$ matrix $M$ for $d, k \in \NN$.
\begin{gather*}
\abs{b(x,z)-b(y,z)}^2 + \norm{\upsigma(x) - \upsigma(y)}^2 + \int_{\RR^m_*} 
\abs{g(x,\xi) - g(y,\xi)}^2 \varPi(\D \xi)\\
\mspace{100mu}
+\,\abs{\rc(x,z)-\rc(y,z)}^2\;\le\; C_R\abs{x-y}^2
\qquad\forall\,x,y\in B_R\,,\ \forall\,z\in\Act\,,\\
\bigl\langle x,b(x,z)\bigr\rangle^+
+ \norm{\upsigma(x)}^2 + \int_{\RR^m_*} \abs{g(x,\xi)}^2 \varPi(\D \xi)
\;\le\; C_1(1+\abs{x}^2)\qquad\forall\, (x,z)\in\Rd\times\Act\,,\\
\sum_{i,j}^{}a^{ij}(x)\zeta_i\zeta_j\;\ge\; (C_{R})^{-1} \abs{\zeta}^2
\quad\forall \zeta\in\Rd\,,\ \forall\,x\in B_R\,.
\end{gather*}

The measure $\nu_x$ in \cref{E-Ag} then takes the form
$\nu_{x}(A) = \varPi\bigl(\{\xi\in\RR_{*}^{m}\,\colon g(x,\xi)\in A\}\bigr)$,
and it clearly  satisfies
$\int_{\Rd} \abs{y}^2\,\nu_x(\D{y})\;<\;C_R\abs{x}^2$.
Note that for this model $\bm\nu=\nu_x(\Rd)$ is constant.
It is evident that if $g(x,\xi)$ does not depend on $x$, 
then $\nu_x$ is translation invariant.

\subsection{Existence of solutions}
For any admissible control $Z_t$,
the It\^o equation in \cref{E-sde} has a unique strong solution \cite{GS72},
is right-continuous w.p.$1$, and is a strong Feller process.
On the other hand, if $Z_t$ is a Markov control, i.e., if it takes the
form $Z_t=v(t,X_t)$ for some Borel measurable function $v\colon\RR_+\times\Rd$,
then it follows from the results in \cite{Gyongy-96} that,
under the assumptions in \cref{S4.2}, the diffusion
\begin{equation}\label{E-sde-aux}
\D{\Tilde X_t} \;=\;
b(\Tilde X_t,v(t,\Tilde X_t))\,\D{t} + \upsigma(\Tilde X_t)\,\D{W_t}\,,
\quad X_0 = x\in \RR^d
\end{equation}
has a unique strong solution.
As shown in \cite{Skorokhod-89}, since the
the L\'evy measure is finite,
the solution of \cref{E-sde} can be constructed in a piecewise fashion
using the solution of \cref{E-sde-aux} (see also \cite{Li-05}).
It thus follows that, under a Markov control, \cref{E-sde-aux} has a unique
strong solution.
In addition, its transition probability has positive mass.

Of fundamental importance in the study of functionals of $X$ is
It\^o's formula.
For $f\in\Cc^{2}(\RR^{d})$ and $Z_s$ an admissible control,
it holds that
\begin{equation}\label{E-Ito}
f(X_{t}) \;=\; f(X_{0}) + \int_{0}^{t}\Ag f(X_{s},Z_s)\,\D{s}
+ \mathscr{M}_{t}\quad\text{a.s.},
\end{equation}
with $\Ag$ as in \cref{E-Ag}, and
\begin{align}\label{E-mart}
\mathscr{M}_{t} &\;\df\; \int_{0}^{t}\bigl\langle\nabla f(X_{s}),
\upsigma(X_{s})\,\D{W}_{s}\bigr\rangle  \\ 
&\mspace{100mu}
 + \int_{0}^{t}\int_{R^m_*}
\Bigl(f\bigl(X_{s-} + g(X_{s-},\xi)\bigr)- f(X_{s-})\Bigr) \,
\widetilde\cN(\D s, \D \xi)\nonumber
\end{align}
is a local martingale.
Krylov's extension of It\^o's formula \cite[p.~122]{Krylov}
shows that \eqref{E-Ito} is valid for functions $f$ in the local
Sobolev space $\Sobl^{2,p}(\RR^{d})$, $p\ge d$.

Recall that, in the context of diffusions,
a control $v\in\Usm$ is called \emph{stable}
if the process $X$ under $v$ is positive Harris recurrent.
This is of course equivalent to the existence of
an invariant probability measure for $X$,
and it follows by the Theorem in \cite{PE82} that
$\mu_v$ is an invariant probability measure for the diffusion
if and only if it is infinitesimally invariant for the operator $\Ag$
in the sense of \eqref{E-infmeas}.
Thus the two notions of stable controls agree.

\subsection{Existence of an optimal stationary Markov control}

\begin{definition}
For $Z \in \Uadm$ and $x \in \RR^d$, we define the mean empirical measures
$\{\Bar{\zeta}^Z_{x,t}\,\colon t>0\}$, and (random) empirical measures
$\{\zeta^Z_{t}\,\colon t>0\}$, by
\begin{equation}\label{E-emp}
\Bar{\zeta}^Z_{x,t}(f)\;=\;
\int_{\RR^d\times\Act}f(x,z)\,\Bar{\zeta}^Z_{x,t}(\D{x},\D{z})
\;\df\; \frac{1}{t}\int_{0}^{t}
\Exp_x^Z\biggl[\int_{\Act} f(X_s,z)\,Z_s(\D{z})\biggr]\,\D{s}\,,
\end{equation}
and $\zeta^Z_{x,t}$ as in \cref{E-emp} but without the expectation
$\Exp_x^Z$,
respectively, for all $f\in\Cc_b(\RR^d\times\Act)$.
\end{definition}

We let $\overline{\RR}^{d}$ denote the one-point compactification of $\RR^{d}$, and
we view $\RR^{d}\subset\overline\RR^{d}$ via the natural imbedding.
As a result, $\Pm(\RR^{d}\times\Act)$ is viewed as a subset
of $\Pm(\overline\RR^{d}\times\Act)$.
Let $\Bar\eom$ denote the closure of $\eom$ in $\Pm(\overline\RR^{d}\times\Act)$.

\begin{lemma}\label{L4.2}
Almost surely, every limit $\Hat{\zeta} \in  \Pm(\overline{\RR}^d\times\Act)$
of $\zeta^Z_{t}$ as $t\to\infty$ takes the form 
$\Hat{\zeta}= \delta \zeta' + (1-\delta) \zeta''$ for some $\delta \in [0,1]$,
with $\zeta' \in \eom$ and $\zeta'' (\{\infty\} \times \Act) =1$.
The same claim  holds for the mean empirical measures,
without the qualifier `almost surely'.
\end{lemma}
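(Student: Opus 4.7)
The plan is to extract weakly convergent subsequences of the empirical measures, decompose each limit by its mass on $\Rd\times\Act$ versus $\{\infty\}\times\Act$, and then use It\^o's formula together with a martingale law of large numbers to verify the ergodic occupation measure property on the finite part.

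First, since $\overline\Rd\times\Act$ is compact, $\Pm(\overline\Rd\times\Act)$ is weakly sequentially compact. Given any sequence $t_n\to\infty$, I would extract a subsequence along which $\zeta^Z_{t_n}\to\Hat\zeta$ weakly in $\Pm(\overline\Rd\times\Act)$. Set $\delta\df\Hat\zeta(\Rd\times\Act)\in[0,1]$. When $\delta\in(0,1)$, define the probability measures $\zeta'\df\delta^{-1}\Hat\zeta|_{\Rd\times\Act}$ and $\zeta''\df(1-\delta)^{-1}\Hat\zeta|_{\{\infty\}\times\Act}$; the boundary cases $\delta\in\{0,1\}$ are handled by making one of these choices trivial. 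The decomposition $\Hat\zeta=\delta\zeta'+(1-\delta)\zeta''$ and the identity $\zeta''(\{\infty\}\times\Act)=1$ follow by construction.

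The nontrivial step is showing $\zeta'\in\eom$ whenever $\delta>0$, i.e., $\int\Ag_z f\,\D\zeta'=0$ for every $f\in\sC$. Fix such an $f$ and apply It\^o's formula \eqref{E-Ito}, divided by $t$, to get
\begin{equation*}
\zeta^Z_t(\Ag_z f)\;=\;\frac{f(X_t)-f(X_0)}{t}\;-\;\frac{\mathscr{M}_t}{t}\,.
\end{equation*}
Boundedness of $f$ sends the first term on the right to $0$ as $t\to\infty$. For the martingale $\mathscr{M}$ of \eqref{E-mart}, I would bound $\langle\mathscr{M}\rangle_t\le C\,t$ by using that $\nabla f$ is bounded with compact support while $\upsigma$ is locally bounded (so the Brownian contribution is $O(t)$), and that the jump contribution $\int_0^t\int\abs{f(X_{s-}+g(X_{s-},\xi))-f(X_{s-})}^2\,\varPi(\D\xi)\,\D{s}$ is $O(t)$ thanks to Lipschitz continuity of $f$ combined with $\int\abs{g(x,\xi)}^2\,\varPi(\D\xi)\le C_1(1+\abs{x}^2)$. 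The strong law for square-integrable local martingales then gives $\mathscr{M}_t/t\to 0$ a.s., so $\zeta^Z_t(\Ag_z f)\to 0$ a.s. For the mean empirical measure, $\mathscr{M}$ is a true martingale with $\Exp[\mathscr{M}_t]=0$, and the same identity holds deterministically.

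To pass this identity through the weak limit, note that $\Ag_z f$ is uniformly bounded and continuous on $\Rd\times\Act$ by \cref{A1.1}(b)--(c). Introduce smooth cutoffs $\chi_R\in\Cc_c^\infty(\Rd)$ with $0\le\chi_R\le1$ and $\chi_R\equiv 1$ on $B_R$, and split
\begin{equation*}
\zeta^Z_{t_n}(\Ag_z f)\;=\;\zeta^Z_{t_n}(\chi_R\,\Ag_z f)\;+\;\zeta^Z_{t_n}((1-\chi_R)\,\Ag_z f)\,.
\end{equation*}
The first term converges to $\delta\zeta'(\chi_R\Ag_z f)$ since $\chi_R\Ag_z f$ extends continuously to $\overline\Rd\times\Act$ by zero at infinity, and as $R\to\infty$ this tends to $\delta\zeta'(\Ag_z f)$ by dominated convergence. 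The main technical obstacle is the residual: for $R$ larger than the radius of the support of $f$, it equals $\zeta^Z_{t_n}((1-\chi_R)\cI f)$ with $\abs{\cI f(x)}\le\norm{f}_{\Lp^\infty}\,\nu_x(\supp f-x)$ for $\abs{x}>R$. I would argue that $\nu_x(\supp f-x)\to 0$ as $\abs{x}\to\infty$ by exploiting the pushforward representation $\nu_x=\varPi\comp g(x,\cdot)^{-1}$ together with the growth bound on $g$ and the finite-L\'evy-measure hypothesis of \cref{S4.2}: the event $\{g(x,\xi)\in\supp f-x\}$ forces $\abs{g(x,\xi)}\gtrsim\abs{x}$, and a tightness argument on $\varPi$ using its finite total mass on the relevant jump sets makes the residual vanish uniformly in $n$ as $R\to\infty$. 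Once this is in hand, $\zeta'(\Ag_z f)=0$ follows, and the argument for the mean empirical measures is identical modulo replacing a.s.\ convergence with convergence of deterministic expectations.
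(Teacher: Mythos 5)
Your overall route---weak compactness of $\Pm(\overline{\RR}^d\times\Act)$, the decomposition of the limit point, It\^o's formula for $f\in\sC$, a martingale strong law to dispose of $\mathscr{M}_t/t$, and a cutoff argument to pass to the limit---is the same as the paper's, which carries out the martingale step by hand (a discrete-time skeleton plus a Poisson interpolation estimate, splitting on $\{\langle M\rangle_\infty=\infty\}$ and its complement) and then defers the limit passage to the argument of \cite[Theorem~3.4.7]{ABG12}. Citing a continuous-time strong law for locally square-integrable martingales instead is acceptable, but your justification of $\langle\mathscr{M}\rangle_t\le Ct$ for the jump part is off: Lipschitz continuity of $f$ together with $\int_{\RR^m_*}\abs{g(x,\xi)}^2\,\varPi(\D\xi)\le C_1(1+\abs{x}^2)$ only gives a bound of the form $C\int_0^t(1+\abs{X_s}^2)\,\D{s}$, which is not $O(t)$ pathwise; the correct (and easy) bound uses $\abs{f(x+g)-f(x)}\le 2\norm{f}_{\Lp^\infty(\Rd)}$ together with the finiteness of $\varPi$, as in the paper.

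The genuine gap is in your treatment of the residual $\zeta^Z_{t_n}\bigl((1-\chi_R)\,\cI f\bigr)$. You claim $\nu_x(\supp f - x)\to 0$ as $\abs{x}\to\infty$, but this does not follow from the hypotheses of \cref{S4.2}. The event $\{g(x,\xi)\in\supp f - x\}$ indeed forces $\abs{g(x,\xi)}\ge\abs{x}-C$, but Chebyshev with the second-moment bound only yields $\varPi\bigl(\abs{g(x,\cdot)}\ge\abs{x}-C\bigr)\le C_1(1+\abs{x}^2)/(\abs{x}-C)^2$, which stays bounded away from zero, and the finite total mass of $\varPi$ does not help because the sets of $\xi$ in question move with $x$. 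Concretely, $g(x,\xi)=-x$ satisfies every assumption of \cref{S4.2} (it is precisely the example in the remark following \cref{T2.5}); then $\nu_x=\bm\nu\,\delta_{-x}$, so $\nu_x(\supp f-x)=\bm\nu$ for all $x$ whenever $0\in\supp f$, and $\cI f(x)\to\bm\nu\,f(0)\neq 0$, so your residual does not vanish as $R\to\infty$ uniformly in $n$. This is consistent with the paper itself, which in the remark after \cref{T3.5} lists ``$\nu_x(B_R-x)$ vanishes as $\abs{x}\to\infty$'' as an \emph{additional} hypothesis rather than a consequence of the standing assumptions. As written, then, the key limit-passage step of your proof fails; it goes through only under extra structure on $\nu_x$ (e.g.\ translation invariance with compact support, in which case $\cI f$ is itself compactly supported, or the vanishing condition just quoted), so you either need such a hypothesis or a different argument for how the escaping mass interacts with the nonlocal term.
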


\begin{proof}
Write $\Hat{\zeta}=\delta \zeta' + (1-\delta) \zeta''$
 for some
$\zeta'\in \Pm(\RR^d\times \Act)$, and
$\zeta'' (\{\infty\} \times \Act) =1$.
For $f\in\sC$, applying It\^o's formula, we obtain
\begin{equation*}
\frac{f(X_t) - f(X_0)}{t} \; =\; \frac{1}{t}\int_{0}^{t} \Ag_{Z_s}f(X_s)\,\D{s}
+ \frac{1}{t}\,\mathscr{M}_{t}\,,
\end{equation*}
where $\mathscr{M}_t$ is given in \cref{E-mart}.
As shown in the proof of \cite[Lemma 3.4.6]{ABG12}, we have
$\frac{1}{t}\int_{0}^{t} \bigl\langle\nabla f(X_{s}),
\upsigma(X_{s})\,\D{W}_{s}\bigr\rangle \to0$
a.s.\  as $t\rightarrow\infty$.

Define 
\begin{equation}\label{EL4.2A}
M_{1,t} \;\df\; \int_{0}^{t}\int_{R^m_*}
\Bigl(f\bigl(X_{s-} + g(X_{s-},\xi)\bigr)- f(X_{s-})\Bigr)\,
\cN(\D s, \D \xi)\,,
\end{equation}
and $M_{2,t}$ analogously by replacing $\cN(\D s, \D \xi)$ by 
$\varPi(\D{\xi})\,\D{s}$ in \cref{EL4.2A}.
Note that the second integral
in \cref{E-mart}, denoted as $M_t$, is a square integrable martingale,
and takes the form
$M_t= M_{1,t} - M_{2,t}$.  
Since $f$ is bounded on $\RR^d$ and $\varPi$ is a finite measure,
we have $\langle M_1\rangle_t \le C_1\,\cN(t,\RR^m_*)$,
and $\langle M_2\rangle_t \le C_2t$ 
for some positive constants $C_1$ and $C_2$.
Since  $\langle M\rangle_t \le \langle M_1\rangle_t + \langle M_2\rangle_t$,
then by Proposition 7.1 in \cite{Ross14} we obtain
$\limsup_{t\rightarrow\infty}\frac{\langle M\rangle_t}{t}
<\infty$ a.s.
For the discrete parameter square-integrable martingale $\{M_n\,\colon n\in\NN\}$,
it is well-known that 
$\lim_{n\rightarrow\infty} \frac{M_n}{\langle M\rangle_n} = 0$ a.s.
on the event $\{\langle M\rangle_\infty = \infty\}$.
Thus, we obtain
\begin{equation}\label{EL4.2B}
\lim_{n\rightarrow\infty} \frac{M_n}{n} \;=\; 0 \quad \text{a.s.}
\end{equation}
on the event $\{\langle M\rangle_\infty = \infty\}$.
Since $f$ is bounded, then for some constant $C>0$, we have
\begin{equation}\label{EL4.2C}
\sup_{t\in[n,n+1]}\frac{\abs{M_t -M_n}}{n}\;\le\;
\frac{C}{n}\,\bigl(\cN(n+1,\RR^m_*)-\cN(n,\RR^m_*)+ 1\bigr)
\;\xrightarrow[n\to\infty]{}\;0\,,
\end{equation}
and \cref{EL4.2B}--\cref{EL4.2C} imply that
$\lim_{t\rightarrow\infty}\frac{1}{t}\,M_t \;\to\; 0$ a.s.\
on the event $\{\langle M\rangle_\infty = \infty\}$.

Next, we examine convergence on the event $\{\langle M\rangle_\infty < \infty\}$.
It is well-known that a
square-integrable martingale $\{ M_n\,\colon n\in\NN\}$
with quadratic variation $\langle M\rangle$ satisfies 
$\{\langle M\rangle_\infty<\infty\}\subset \{M_n\to\ \}$ a.s.,
where we write $\{M_n\to\ \}$ for the event on which $(M_n)$ converges to
a real-valued limit \cite[Theorem~2.15]{Hall-Heyde}.
Thus \cref{EL4.2B} holds on the event $\{\langle M\rangle_\infty < \infty\}$, 
and it then follows by \cref{EL4.2C} that
$\lim_{t\rightarrow\infty}\frac{1}{t}\,M_t \;\to\; 0$ a.s.

Thus we have shown that $\lim_{t\rightarrow\infty}\frac{1}{t}\,\mathscr{M}_t \to 0$ a.s.,
and the claims of the lemma then follow as in the proof of \cite[Theorem~3.4.7]{ABG12}.
\end{proof}

\begin{theorem}\label{T4.3}
There exists an optimal
control $v \in\Ussm$ for the ergodic problem. In addition,
every stationary Markov optimal control $v_*$ is in $\Ussm$, and is pathwise optimal
in somewhat stronger sense,
i.e., it satisfies
\begin{equation}\label{ET4.3A}
\liminf_{T\rightarrow\infty}\;\frac{1}{T}\;
\biggl[\int_{0}^{T}\rc(X_t,Z_t)\,\D{t}\biggr] \;\ge\;
\limsup_{T\rightarrow\infty}\;\frac{1}{T}\;
\biggl[\int_{0}^{T}\rc\bigl(X_t,v_*(X_t)\bigr)\,\D{t}\biggr]\;=\;\varrho_*
\end{equation}
a.s.\ for any admissible control $Z_t$.
\end{theorem}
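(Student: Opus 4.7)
The plan is to assemble three ingredients: existence of an optimal $v_*\in\Ussm$ via the convex-analytic theorem of \cref{S2.2}, the pathwise lower bound via \cref{L4.2}, and the pathwise upper bound under $v_*$ via ergodicity. First, the infimum $\varrho_*=\inf_{\uppi\in\eom}\uppi(\rc)$ is attained at some $\uppi_*$, and disintegrating $\uppi_*=\mu_*(\D x)\,v_*(\D z\mid x)$ produces $v_*\in\Usm$ together with an infinitesimal invariant $\mu_*$. Since the hypotheses of \cref{S4.2} make the martingale problem for $X$ under $v_*$ well posed, the result of \cite{PE82} cited after \cref{E-sde-aux} identifies $\mu_*$ with a genuine invariant probability measure, so $v_*\in\Ussm$ with $\mu_{v_*}(\rc_{v_*})=\varrho_*$.

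For the lower bound in \cref{ET4.3A}, fix $Z\in\Uadm$ and work on the full-measure event provided by \cref{L4.2}: every weak limit $\Hat{\zeta}$ of $\zeta^Z_{x,t}$ in $\Pm(\overline{\RR}^{d}\times\Act)$ has the form $\delta\zeta'+(1-\delta)\zeta''$ with $\zeta'\in\eom$ and $\zeta''(\{\infty\}\times\Act)=1$. The unbounded cost $\rc$ is approximated from below by bounded continuous truncations that extend to the one-point compactification: for $n$ large enough that $B_n\supset\sB_\circ$ and $n>\varrho_*+2\epsilon_\circ$, pick a continuous cutoff $\phi_n\colon\Rd\to[0,1]$ with $\phi_n\equiv1$ on $B_n$ and $\phi_n\equiv0$ on $B_{n+1}^c$, and set
\begin{equation*}
h_n(x,z) \;\df\; \phi_n(x)\bigl(\rc(x,z)\wedge n\bigr) + \bigl(1-\phi_n(x)\bigr)\,(\varrho_*+2\epsilon_\circ)\,.
\end{equation*}
Since $\rc>\varrho_*+2\epsilon_\circ$ on $\sB_\circ^c$, one checks that $h_n\le\rc$ on $\Rd\times\Act$, $h_n\nearrow\rc$ pointwise, and $h_n(\infty,z)=\varrho_*+2\epsilon_\circ$, so $h_n$ is bounded continuous on $\overline{\RR}^{d}\times\Act$. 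Along any subsequence $t_k\to\infty$ realising $\Hat\zeta$, weak convergence together with $\zeta^Z_{x,t_k}(\rc)\ge\zeta^Z_{x,t_k}(h_n)$ gives
\begin{equation*}
\liminf_{k\to\infty}\zeta^Z_{x,t_k}(\rc)\;\ge\;\delta\,\zeta'(h_n)+(1-\delta)(\varrho_*+2\epsilon_\circ)\,,
\end{equation*}
and letting $n\to\infty$ via monotone convergence, combined with $\zeta'(\rc)\ge\varrho_*$ (since $\zeta'\in\eom$), yields $\liminf_{t\to\infty}\zeta^Z_{x,t}(\rc)\ge\varrho_*$ almost surely, which is the left-hand inequality of \cref{ET4.3A}.

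The upper bound under $v_*$ follows from ergodicity: by \cref{S4.2} the diffusion matrix $a$ is uniformly non-degenerate, so $X$ under $v_*$ inherits strong Feller continuity and positive transition densities from \cref{E-sde-aux} through the piecewise construction; together with the invariant probability $\mu_{v_*}$ this forces positive Harris recurrence, and Birkhoff's ergodic theorem gives $\tfrac{1}{T}\int_0^T\rc(X_t,v_*(X_t))\,\D t\to\mu_{v_*}(\rc_{v_*})=\varrho_*$ almost surely from every initial state, supplying the right-hand equality of \cref{ET4.3A}. The same lower-bound argument applied to the mean empirical measures $\Bar{\zeta}^{v}_{x,t}$ of a stationary Markov optimal $v\in\Usm$ shows that $\Tilde\varrho_{v}(x)=\varrho_*$ forces $(1-\delta)(2\epsilon_\circ)=0$ in every subsequential limit, whence $\delta=1$, the limit lies in $\eom$, and $v\in\Ussm$ after invoking \cite{PE82} once more. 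The principal obstacle is engineering the truncation $h_n$ so that it extends continuously to the one-point compactification while assigning at infinity exactly the constant $\varrho_*+2\epsilon_\circ$ fixed in \cref{S2.2}; the interplay of this value with $\zeta'(\rc)\ge\varrho_*$ is precisely what converts \cref{L4.2} into the sharp pathwise bound $\varrho_*$.
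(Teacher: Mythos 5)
Your proposal is correct and follows essentially the same route as the paper, whose proof consists of citing the attainment theorem of \cref{S2.2} (via \cite[Theorem~3.4.5]{ABG12}) for existence, and \cref{L4.2} together with the proof of \cite[Theorem~3.4.7]{ABG12} for the pathwise claims. Your truncation $h_n$, the compactification argument forcing $\delta=1$ for optimal Markov controls, and the Harris-recurrence/Birkhoff step are precisely the details those citations encapsulate.
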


\begin{proof}
Define $\Hat\varrho_*\df\inf_{\uppi\in\eom}\,\uppi(\rc)$.
Following the proof of \cite[Theorem~3.4.5]{ABG12},
we have 
$\Hat\varrho_*=\uppi_{v_*}(\rc)$ for some $v_*\in\Ussm$.
Also, \cref{ET4.3A} holds by 
\cref{L4.2} and the proof in \cite[Theorem~3.4.7]{ABG12}.
\end{proof}


\subsection{The ergodic HJB equation}

We summarize the results in the following theorem.

\begin{theorem}
We assume \textup{\ref{H2}} for some $\Hat{v}\in\Usm$.
Then we have the following:
\begin{itemize}
\item[\textup{(}a\/\textup{)}]
There exists a unique function $V\in\Sobl^{2,p}(\Rd)$, $p>d$, with $V(0)=0$,
which is bounded from below in $\Rd$ and solves
$\min_{z\in\Act}\;\bigl[\Ag_{z}\,V(x) + \rc(x,z)\bigr] = \varrho$,
with $\varrho=\varrho_*$.
For $\varrho<\varrho_*$, there is no such solution.
Moreover, if $\nu_x$ has locally compact support \textup{(}see \cref{D2.7}\textup{)},
then $V\in\Cc^2(\Rd)$.

\item[\textup{(}b\/\textup{)}]
A control $v\in\Usm$ is optimal if and only if it satisfies
\begin{equation}\label{ET4.4B}
b^i_{v}(x)\,\partial_i V(x)+\rc_{v}(x)
\;=\;  \inf_{z\in\Act}\;\bigl[b^i(x,z)
\partial_i V(x)+\rc(x,z)\bigr]\quad\text{a.e.\ } x\in\Rd\,.
\end{equation}

\item[\textup{(}c\/\textup{)}]
The solution $V$ has the stochastic representation
\begin{equation*}
V(x) \;=\; \lim_{r\searrow0}\;\inf_{v\in\Ussm}\; \Exp^v_x\biggl[\int_0^{\uuptau_r}
(\rc_v(X_t)-\varrho_*\bigr)\,\D{t}\biggr]\,.
\end{equation*}
\end{itemize}
\end{theorem}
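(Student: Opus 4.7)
The plan is to deduce the three assertions from the analytical results of \cref{S3}, now augmented by the fact that the jump diffusion in \cref{E-sde} is non-explosive (which makes \ref{H1} automatic) and admits a strong solution on which It\^o's formula, including Krylov's extension to $\Sobl^{2,p}$ with $p>d$, may be applied. Part~(a) is a direct appeal to \cref{T3.3}: under \ref{H2}, it produces a unique $V\in\Sobl^{2,p}(\Rd)$ for every $p>1$, hence for any $p>d$, normalized so that $V(0)=0$, bounded below, solving the HJB with $\varrho=\varrho_*$, and rules out solutions with $\varrho<\varrho_*$. The upgrade to $V\in\Cc^2(\Rd)$ under locally compact support (\cref{D2.7}) refines \cref{R3.4}: Sobolev embedding first yields $V\in\Cc^{1,r}_{\mathrm{loc}}(\Rd)$; the representation $\widetilde\cI V(x)=\int_{\RR^m_*}V(x+g(x,\xi))\,\varPi(\D\xi)$ together with the Lipschitz continuity of $g$, finiteness of $\varPi$, and the boundedness of the effective range of $\xi$ supplied by \cref{D2.7}, force $\widetilde\cI V$ to be locally H\"older; classical Schauder regularity \cite[Theorem~9.19]{GilTru} applied to the local equation $\widetilde\Lg_{v_*}V=\varrho_*-\rc_{v_*}-\widetilde\cI V$ then delivers $V\in\Cc^{2,r'}_{\mathrm{loc}}(\Rd)$.

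For part~(b), the forward implication already follows from \cref{T3.5}, applied via \cref{T4.3} which identifies optimal admissible controls with stable Markov controls in~$\Ussm$. For the converse, I would take any $v\in\Usm$ satisfying \cref{ET4.4B}, so that $\Ag_vV+\rc_v=\varrho_*$ a.e., and apply Krylov's It\^o formula to~$V$ on $[0,\uptau_R\wedge t]$ to obtain
\begin{equation*}
\Exp^v_x\bigl[V(X_{t\wedge\uptau_R})\bigr]-V(x)\;=\;\Exp^v_x\biggl[\int_0^{t\wedge\uptau_R}\bigl(\varrho_*-\rc_v(X_s)\bigr)\,\D s\biggr];
\end{equation*}
using non-explosion together with the lower bound $V\ge-C$, I would send $R\to\infty$ and divide by~$t$ to conclude
$\Exp^v_x\bigl[t^{-1}\!\int_0^t\rc_v(X_s)\,\D s\bigr]\le\varrho_*+(V(x)+C)/t$,
so that $\Tilde\varrho_v(x)\le\varrho_*$ upon passing to the $\limsup$, i.e.\ $v$ is optimal.

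Part~(c) follows from the same It\^o identity, now stopped at $\uuptau_r\wedge T$, which is an \emph{equality} under the optimal $v_*$ (by part~(b)) and an inequality `$\le$' under any competing $v\in\Ussm$ thanks to the HJB inequality $\Ag_zV+\rc\ge\varrho_*$. Letting $T\to\infty$, justified by $\Exp^v_x[\uuptau_r]<\infty$ for $x\notin B_r$ (a Foster--Lyapunov consequence of near-monotonicity and stability of~$v$), I would obtain the dynamic-programming identity
\begin{equation*}
V(x)\;=\;\inf_{v\in\Ussm}\;\Bigl\{\Exp^v_x\bigl[V(X_{\uuptau_r})\bigr]+\Exp^v_x\biggl[\int_0^{\uuptau_r}\bigl(\rc_v(X_s)-\varrho_*\bigr)\,\D s\biggr]\Bigr\}
\end{equation*}
for each $r>0$; since $\abs{X_{\uuptau_r}}\le r$ pathwise while $V$ is continuous at the origin with $V(0)=0$, the boundary term vanishes as $r\searrow0$ uniformly in~$v$, producing the stated representation. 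The principal obstacle I anticipate is precisely this final uniformity-in-$v$ step, since commuting the infimum with $r\searrow0$ demands the boundary term to vanish uniformly over $\Ussm$; fortunately, the pathwise bound $\abs{X_{\uuptau_r}}\le r$ and the modulus of continuity of $V$ at~$0$ supplied by $V\in\Cc^{1,r}_{\mathrm{loc}}$ furnish this uniformity, while the passage $R\to\infty$ in (b) and $T\to\infty$ in (c) both rest on non-explosion and the one-sided bound on~$V$.
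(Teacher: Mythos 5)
Parts (a) and (b) of your proposal follow the paper's own route and are sound. For (a), the paper simply invokes \cref{T3.3} and \cref{R3.4}; your observation that \ref{H1} is automatic from non-explosiveness under the conditions of \cref{S4.2} matches the paper's remark that \cref{T3.2} is "straightforward to establish" here, and your upgrade to $V\in\Cc^2$ via $\widetilde\cI V(x)=\int_{\RR^m_*}V\bigl(x+g(x,\xi)\bigr)\varPi(\D\xi)$, the $\Lp^2(\varPi)$-Lipschitz bound on $g$, finiteness of $\varPi$, and locally compact support is in fact a more careful justification than the bare citation of \cref{R3.4} (which is phrased for translation-invariant kernels); just write the local equation with the minimized Hamiltonian rather than through a measurable selector $v_*$ before applying Schauder. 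For (b), necessity via \cref{T3.5} and \cref{T4.3}, and sufficiency via Krylov's It\^o formula, is exactly the intended argument; note that there the one-sided bound $V\ge -C$ suffices because $\Exp^v_x\bigl[V(X_{t\wedge\uptau_R})\bigr]$ sits on the side of the identity where a lower bound helps, so the $R\to\infty$ passage is genuinely harmless.

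In (c), however, you have mislocated the difficulty, and the step you do rely on is not justified as stated. The $r\searrow0$ uniformity you call the principal obstacle is trivial, exactly as you note, since $\abs{\Exp^v_x[V(X_{\uuptau_r})]}\le\sup_{\overline{B}_r}\abs{V}\to0$ independently of $v$ (and your point that $\abs{X_{\uuptau_r}}\le r$ despite the jumps is correct). The real issue is the limit $T\to\infty$ in the inequality $V(x)\le\Exp^v_x[V(X_{\uuptau_r\wedge T})]+\Exp^v_x\bigl[\int_0^{\uuptau_r\wedge T}(\rc_v-\varrho_*)\,\D s\bigr]$ for an \emph{arbitrary} stable competitor $v$: one must show $\liminf_{T\to\infty}\Exp^v_x\bigl[V(X_T)\Ind_{\{\uuptau_r>T\}}\bigr]\le0$, and neither $\Exp^v_x[\uuptau_r]<\infty$ nor the lower bound $V\ge-C$ gives this, because $V$ is unbounded above and there is no a priori control of $\Exp^v_x[V(X_T)]$ under a generic $v\in\Ussm$ (the bound $V\le\mathrm{const}+\Lyap$ comes from a supersolution property under $\Hat v$ only). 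This is precisely the content the paper delegates to the proof of Lemma~3.6.9 in \cite{ABG12}: one must either restrict to competitors with $\Exp^v_x[\int_0^{\uuptau_r}\rc_v\,\D t]<\infty$ (the inequality being trivial otherwise) and use the near-monotone structure to extract a sequence $T_n\to\infty$ along which the offending term vanishes, or argue through the discounted value functions. In addition, $\Exp^v_x[\uuptau_r]<\infty$ for all small $r$ and every stable $v$ is itself not a consequence of stability alone and needs an argument using the nondegenerate diffusion part and the finite jump rate. By contrast, the "$\ge$" half under the optimal $v_*$ goes through with Fatou and $V\ge-C$ exactly as you describe, so the gap is confined to this one limit, but it is the step where the actual work of part (c) lies.
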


\begin{proof}
Under the assumptions in \cref{S4.2}, it is straighforward to establish
\cref{T3.2}.
Thus, part (a) follows from \cref{T3.3,R3.4}.
Using the It\^o formula, one can readily show that any
$v$ which satisfies \cref{ET4.4B} is stable and optimal.
The necessity part of (b) follows by \cref{T3.5}.
Part (c) can be established by following the proof of
Lemma~3.6.9 in \cite{ABG12}.
\end{proof}

\section{Concluding remarks}

The results in this paper extend naturally to models under uniform stability,
in which case, of course, we do not need to assume that $\rc$ is coercive.
Suppose that there exist nonnegative functions $\Psi\in\Cc^2(\Rd)$,
and $h\colon\Rd\times\Act$, with $h\ge1$ and locally bounded, satisfying
\begin{equation}\label{E-Lyap}
\Ag_z \Psi (x) \;\le\; \kappa\,\Ind_{\sB}(c) - h(x,z)
\qquad\forall\,(x,z)\in\Rd\times\Act\,,
\end{equation}
for some constant $\kappa$ and a ball $\sB\subset\Rd$.
In addition, suppose that either $\rc$ is bounded, or that $\abs{\rc}$ grows slower
than $h$.
Under \cref{E-Lyap}, the jump diffusion is positive recurrent under any
stationary Markov control, and the collection of ergodic occupation measures
is tight.
Using $\Psi$ as a barrier, all the results in \cref{S4}
can be readily obtained, and moreover, for any $v\in\Usm$, the Poisson
equation $\Ag_v \Phi = \rc_v - \varrho_v$ has a solution in
$\Sobl^{2,p}(\Rd)$, for any $p>1$, which is unique, up to an additive constant,
in the class
of functions $\Phi$ which satisfy $\abs{\Phi}\le C(1+h_v)$ for some constant $C$.

We have not considered allowing the jumps to be control dependent, primarily because
this is not manifested in the queueing network model motivating this work,
but also because
this would require us to introduce various assumptions on the regularity
of the jumps and the L\'evy measure (see, e.g., \cite{Menaldi-99}).
This, however, is an interesting problem for future work.

In conclusion, what we aimed for in this work, was to study the ergodic
control problem for jump diffusions controlled through the drift via
analytical methods, and under
minimal assumptions on the (finite) L\'evy measure and the parameters.

\section*{Acknowledgments}
This work was supported in part by the
National Science Foundation through grants DMS-1540162, DMS-1715210,
CMMI-1538149, and DMS-1715875,
in part by the Army Research Office
through grant number W911NF-17-1-0019, and in part
Office of Naval Research
through grant number N00014-16-1-2956.

\def\polhk#1{\setbox0=\hbox{#1}{\ooalign{\hidewidth
  \lower1.5ex\hbox{`}\hidewidth\crcr\unhbox0}}}


\begin{thebibliography}{10}

\bibitem{AndNash}
{\sc E.~J. Anderson and P.~Nash}, {\em Linear programming in
  infinite-dimensional spaces}, Wiley-Interscience Series in Discrete
  Mathematics and Optimization, John Wiley \& Sons Ltd., Chichester, 1987.

\bibitem{ABG12}
{\sc A.~Arapostathis, V.~S. Borkar, and M.~K. Ghosh}, {\em Ergodic control of
  diffusion processes}, vol.~143 of Encyclopedia of Mathematics and its
  Applications, Cambridge University Press, Cambridge, 2012.

\bibitem{AA-Harnack}
{\sc A.~Arapostathis, M.~K. Ghosh, and S.~I. Marcus}, {\em Harnack's inequality
  for cooperative weakly coupled elliptic systems}, Comm. Partial Differential
  Equations, 24 (1999), pp.~1555--1571,
  \url{https://doi.org/10.1080/03605309908821475}.

\bibitem{APS17}
{\sc A.~Arapostathis, G.~Pang, and N.~Sandri\'c}, {\em Ergodicity of
  {L}\'evy-driven {SDE}s arising from multiclass many-server queues},  (2017),
  \url{https://arxiv.org/abs/1707.09674}.

\bibitem{Bass-04}
{\sc R.~F. Bass}, {\em Stochastic differential equations with jumps}, Probab.
  Surv., 1 (2004), pp.~1--19 (electronic),
  \url{https://doi.org/10.1214/154957804100000015}.

\bibitem{bass-2009}
{\sc R.~F. Bass}, {\em Regularity results for stable-like operators}, J. Funct.
  Anal., 257 (2009), pp.~2693--2722,
  \url{https://doi.org/10.1016/j.jfa.2009.05.012}.

\bibitem{Bayraktar-13}
{\sc E.~Bayraktar, T.~Emmerling, and J.-L. Menaldi}, {\em On the impulse
  control of jump diffusions}, SIAM J. Control Optim., 51 (2013),
  pp.~2612--2637, \url{https://doi.org/10.1137/120863836}.

\bibitem{BenLi-84}
{\sc A.~Bensoussan and J.-L. Lions}, {\em Impulse control and quasivariational
  inequalities}, $\mu $, Gauthier-Villars, Montrouge; Heyden \& Son, Inc.,
  Philadelphia, PA, 1984.
\newblock Translated from the French by J. M. Cole.

\bibitem{BhBo}
{\sc A.~G. Bhatt and V.~S. Borkar}, {\em Occupation measures for controlled
  {M}arkov processes: characterization and optimality}, Ann. Probab., 24
  (1996), pp.~1531--1562, \url{https://doi.org/10.1214/aop/1065725192}.

\bibitem{Bjorland-12}
{\sc C.~Bjorland, L.~Caffarelli, and A.~Figalli}, {\em Non-local gradient
  dependent operators}, Adv. Math., 230 (2012), pp.~1859--1894,
  \url{https://doi.org/10.1016/j.aim.2012.03.032}.

\bibitem{Bogachev-01}
{\sc V.~I. Bogachev, N.~V. Krylov, and M.~R{\"o}ckner}, {\em On regularity of
  transition probabilities and invariant measures of singular diffusions under
  minimal conditions}, Comm. Partial Differential Equations, 26 (2001),
  pp.~2037--2080, \url{https://doi.org/10.1081/PDE-100107815}.

\bibitem{Bogachev-01d}
{\sc V.~I. Bogachev and M.~R\"ockner}, {\em A generalization of
  {K}has$'$minski\u{\i}'s theorem on the existence of invariant measures for
  locally integrable drifts}, Theory Probab. Appl., 45 (2001), pp.~363--378,
  \url{https://doi.org/10.1137/S0040585X97978348}.

\bibitem{Bogachev-14}
{\sc V.~I. Bogachev, M.~R\"ockner, and S.~V. Shaposhnikov}, {\em On parabolic
  inequalities for generators of diffusions with jumps}, Probab. Theory Related
  Fields, 158 (2014), pp.~465--476,
  \url{https://doi.org/10.1007/s00440-013-0485-0}.

\bibitem{Bogachev-02}
{\sc V.~I. Bogachev, M.~R{\"o}kner, and V.~Stannat}, {\em Uniqueness of
  solutions of elliptic equations and uniqueness of invariant measures of
  diffusions}, Mat. Sb., 193 (2002), pp.~3--36,
  \url{https://doi.org/10.1070/SM2002v193n07ABEH000665}.

\bibitem{caffarelli-silvestre-regu}
{\sc L.~Caffarelli and L.~Silvestre}, {\em Regularity theory for fully
  nonlinear integro-differential equations}, Comm. Pure Appl. Math., 62 (2009),
  pp.~597--638, \url{https://doi.org/10.1002/cpa.20274}.

\bibitem{caffarelli-silvestre-approx}
{\sc L.~Caffarelli and L.~Silvestre}, {\em Regularity results for nonlocal
  equations by approximation}, Arch. Ration. Mech. Anal., 200 (2011),
  pp.~59--88, \url{https://doi.org/10.1007/s00205-010-0336-4}.

\bibitem{cont-tankov}
{\sc R.~Cont and P.~Tankov}, {\em Financial modelling with jump processes},
  Chapman \& Hall/CRC Financial Mathematics Series, Chapman \& Hall/CRC, Boca
  Raton, FL, 2004.

\bibitem{Davis-10}
{\sc M.~H.~A. Davis, X.~Guo, and G.~Wu}, {\em Impulse control of
  multidimensional jump diffusions}, SIAM J. Control Optim., 48 (2010),
  pp.~5276--5293, \url{https://doi.org/10.1137/090780419}.

\bibitem{PE82}
{\sc P.~Echeverr\'{i}a}, {\em A criterion for invariant measures of {M}arkov
  processes}, Z. Wahrscheinlichkeitstheorie verw Gebiete, 61 (1982), pp.~1--16,
  \url{https://doi.org/10.1007/BF00537221}.

\bibitem{Foondun-09}
{\sc M.~Foondun}, {\em Harmonic functions for a class of integro-differential
  operators}, Potential Anal., 31 (2009), pp.~21--44,
  \url{https://doi.org/10.1007/s11118-009-9121-0}.

\bibitem{GS72}
{\sc I.~I. Gihman and A.~V. Skorohod}, {\em Stochastic differential equations},
  vol.~72 of Ergebnisse der Mathematik und ihrer Grenzgebiete, Springer-Verlag,
  Berlin, 1972.

\bibitem{GilTru}
{\sc D.~Gilbarg and N.~S. Trudinger}, {\em Elliptic partial differential
  equations of second order}, vol.~224 of Grundlehren der Mathematischen
  Wissenschaften, Springer-Verlag, Berlin, second~ed., 1983.

\bibitem{gilboa-osher}
{\sc G.~Gilboa and S.~Osher}, {\em Nonlocal operators with applications to
  image processing}, Multiscale Model. Simul., 7 (2008), pp.~1005--1028,
  \url{https://doi.org/10.1137/070698592}.

\bibitem{Gyongy-96}
{\sc I.~Gy{\"o}ngy and N.~Krylov}, {\em Existence of strong solutions for
  {I}t\^o's stochastic equations via approximations}, Probab. Theory Related
  Fields, 105 (1996), pp.~143--158, \url{https://doi.org/10.1007/BF01203833}.

\bibitem{Hall-Heyde}
{\sc P.~Hall and C.~C. Heyde}, {\em Martingale limit theory and its
  application}, Academic Press, Inc. [Harcourt Brace Jovanovich, Publishers],
  New York-London, 1980.
\newblock Probability and Mathematical Statistics.

\bibitem{Hasm-60}
{\sc R.~Z. Has$^{_{^{\prime}}}\!$minski\u{\i}}, {\em Ergodic properties of
  recurrent diffusion processes and stabilization of the solution of the
  {C}auchy problem for parabolic equations}, Theory Probab. Appl., 5 (1960),
  pp.~179--196, \url{https://doi.org/10.1137/1105016}.

\bibitem{Hasm-80}
{\sc R.~Z. Has$^{_{^{\prime}}}\!$minski\u{\i}}, {\em Stochastic stability of
  differential equations}, Sijthoff \& Noordhoff, The Netherlands, 1980.

\bibitem{Kontoyiannis-16}
{\sc I.~Kontoyiannis and S.~P. Meyn}, {\em On the {$f$}-norm ergodicity of
  {M}arkov processes in continuous time}, Electron. Commun. Probab., 21 (2016),
  pp.~Paper No. 77, 10, \url{https://doi.org/10.1214/16-ECP4737}.

\bibitem{Krylov}
{\sc N.~V. Krylov}, {\em Controlled diffusion processes}, vol.~14 of
  Applications of Mathematics, Springer-Verlag, New York, 1980.

\bibitem{Li-05}
{\sc C.~W. Li}, {\em Lyapunov exponents of nonlinear stochastic differential
  equations with jumps}, in Stochastic inequalities and applications, vol.~56
  of Progr. Probab., Birkh\"auser, Basel, 2003, pp.~339--351.

\bibitem{Liu-18}
{\sc J.~Liu, K.~F.~C. Yiu, and A.~Bensoussan}, {\em Optimal inventory control
  with jump diffusion and nonlinear dynamics in the demand}, SIAM J. Control
  Optim., 56 (2018), pp.~53--74, \url{https://doi.org/10.1137/16M1091885}.

\bibitem{Locherbach-17}
{\sc E.~L\"ocherbach and V.~Rabiet}, {\em Ergodicity for multidimensional jump
  diffusions with position dependent jump rate}, Ann. Inst. Henri Poincar\'e
  Probab. Stat., 53 (2017), pp.~1136--1163,
  \url{https://doi.org/10.1214/16-AIHP750}.

\bibitem{Menaldi-97}
{\sc J.-L. Menaldi and M.~Robin}, {\em Ergodic control of reflected diffusions
  with jumps}, Appl. Math. Optim., 35 (1997), pp.~117--137,
  \url{https://doi.org/10.1007/BF02683323}.

\bibitem{Menaldi-99}
{\sc J.-L. Menaldi and M.~Robin}, {\em On optimal ergodic control of diffusions
  with jumps}, in Stochastic analysis, control, optimization and applications,
  Systems Control Found. Appl., Birkh\"auser Boston, Boston, MA, 1999,
  pp.~439--456.

\bibitem{Morrey-66}
{\sc C.~B. Morrey, Jr.}, {\em Multiple integrals in the calculus of
  variations}, Die Grundlehren der mathematischen Wissenschaften, Band 130,
  Springer-Verlag New York, Inc., New York, 1966.

\bibitem{ProtWein-84}
{\sc M.~H. Protter and H.~F. Weinberger}, {\em Maximum principles in
  differential equations}, Springer-Verlag, New York, 1984.
\newblock Corrected reprint of the 1967 original.

\bibitem{Ross14}
{\sc S.~M. Ross}, {\em Introduction to probability models}, Elsevier/Academic
  Press, Amsterdam, 2014.
\newblock Eleventh edition.

\bibitem{Sato-95}
{\sc T.~Sato}, {\em Positive solutions with weak isolated singularities to some
  semilinear elliptic equations}, Tohoku Math. J. (2), 47 (1995), pp.~55--80,
  \url{https://doi.org/10.2748/tmj/1178225635}.

\bibitem{Shaposhnikov-08}
{\sc S.~V. Shaposhnikov}, {\em On nonuniqueness of solutions to elliptic
  equations for probability measures}, J. Funct. Anal., 254 (2008),
  pp.~2690--2705, \url{https://doi.org/10.1016/j.jfa.2008.02.003}.

\bibitem{Skorokhod-89}
{\sc A.~V. Skorokhod}, {\em Asymptotic methods in the theory of stochastic
  differential equations}, vol.~78 of Translations of Mathematical Monographs,
  American Mathematical Society, Providence, RI, 1989.
\newblock Translated from the Russian by H. H. McFaden.

\end{thebibliography}
\end{document}